\documentclass{article}
\usepackage[utf8]{inputenc}
\usepackage{amssymb}
\usepackage{amsmath}
\usepackage{amsfonts}
\usepackage{hyperref}
\usepackage{graphicx}
\usepackage{tikz}
\usepackage{cancel}
\newtheorem{lemma}{Lemma}[section]
\newtheorem{theorem}[lemma]{Theorem}
\newtheorem{corollary}[lemma]{Corollary}
\newtheorem{definition}[lemma]{Definition}

\newtheorem{example}[lemma]{Example}
\newtheorem{construction}[lemma]{Construction}

\newenvironment{proof}[1][Proof]{\begin{trivlist}
\item[\hskip \labelsep {\bfseries #1}]}{\end{trivlist}}
\newcommand{\qed}{\nobreak \ifvmode \relax \else
      \ifdim\lastskip<1.5em \hskip-\lastskip
      \hskip1.5em plus0em minus0.5em \fi \nobreak
      \vrule height0.75em width0.5em depth0.25em\fi}
\title{Orientable sequences over non-binary alphabets}
\author{Abbas Alhakim, Chris J. Mitchell,
Janusz Szmidt, Peter R. Wild \\
aa145@aub.edu.lb, me@chrismitchell.net, \\
janusz.szmidt@tlen.pl, peterrwild@gmail.com}

\begin{document}

\maketitle

\section*{Abstract}

We describe new, simple, recursive methods of construction for \emph{orientable
sequences} over an arbitrary finite alphabet, i.e.\ periodic sequences in which any sub-sequence of $n$ consecutive elements occurs at most once in a period in either direction. In particular we establish how two variants of a generalised Lempel homomorphism can be used to recursively construct such sequences, generalising previous work on the binary case. We also derive an upper bound on the period of an orientable sequence.

\section{Introduction}

This paper is concerned with periodic infinite sequences with elements drawn from a finite alphabet with the property that for a positive integer $n$ (the \emph{order}) any sub-sequence of $n$ consecutive elements (an $n$-{\it tuple}) occurs just once in a period.  One important extremal class of such sequences are the de Bruijn sequences --- see, for example, \cite{Fredricksen82,Sawada23}. These sequences, sometimes referred to as shift register sequences (see Golomb, \cite{Golomb67}), have been very widely studied and have a range of applications, including in coding and cryptography.  One application of particular relevance here is that of \emph{position location}. This involves encoding such a sequence onto a linear surface that allows the location of any point on the surface by examining just $n$ consecutive entries of the sequence (see, for example, Burns and Mitchell \cite{Burns92,Burns93} and Petriu \cite{C35}). More recent work on the use of sequences for position location includes that of Szentandr{\'{a}}si et al.\ \cite{Szentandrasi12}, Bruckstein et al.\
\cite{Bruckstein12}, Berkowitz and Kopparty \cite{Berkowitz16}, and Chee et al.
\cite{Chee19,Chee20}.

We are particularly interested in the special case of \emph{orientable sequences} i.e.\ where, for a given positive integer $n$, any $n$-tuple of consecutive values occurs just once in a period \emph{in either direction} --- these sequences are a particular type of universal cycle --- see Chung et al.\ \cite{Chung92} and Jackson et al.\ \cite{Jackson09}.  Such sequences have position-location applications in cases where the reader of such a sequence wishes to determine both its position and its direction of travel.  The notion of an orientable sequence was first explored over 30 years ago \cite{Burns92,Burns93,Dai93}. Dai et al.\ \cite{Dai93} examined the binary case, and gave both a method of construction and a general upper bound for the period in this case. Much more recently, in 2022 an alternative method of construction for the binary case was described \cite{Mitchell22},
using the Lempel Homomorphism \cite{Lempel70}; this latter work has been further extended by Gabri\'{c} and Sawada \cite{Gabric24}.  However, as far as we are aware, no authors have previously addressed the issue of constructing, or bounding the periods of, orientable sequences over non-binary alphabets, a shortcoming we address here.

For mathematical convenience we consider the elements of a sequence to be elements of $\mathbb{Z}_q$ for an arbitrary integer $q>1$. The paper contains two main contributions relating to the existence of orientable sequences.  After establishing certain preliminary results, in Section~\ref{section-bound} we give a general upper bound for the period of an orientable sequence over an arbitrary alphabet. We then, in Section~\ref{section-construct}, show how generalisations of the Lempel Homomorphism \cite{Alhakim11,Ronse84} can be used to recursively construct orientable sequences over an arbitrary alphabet

\section{The basic notions}

For positive integers $n$ and $q$ greater than one, let $\mathbb{Z}_q^n$ be the set of all $q^n$
vectors of length $n$ with entries from the group $\mathbb{Z}_q$ of residues modulo $q$. A de Bruijn sequence \emph{of order $n$} with alphabet in $\mathbb{Z}_q$ is a periodic sequence that includes every
possible string of length $n$ (sometimes referred to as an $n$-tuple) precisely once as a subsequence of consecutive symbols in one period of
the sequence.

The order $n$ de Bruijn digraph, $B_n(q)$, is a directed graph with $\mathbb{Z}^n_q$ as its vertex
set and where, for any two vectors $\textbf{x} = (x_1, \dots , x_n)$ and $\textbf{y} = (y_1, \dots
, y_n), \ (\textbf{x}; \textbf{y})$ is an edge if and only if $y_i = x_{i+1}$ for every $i$ ($1\leq
i< n$). We then say that $\textbf{x}$ is a \emph{predecessor} of $\textbf{y}$ and $\textbf{y}$ is a
\emph{successor} of $\textbf{x}$. Evidently, every vertex has exactly $q$ successors and $q$
predecessors. Furthermore, two vertices are said to be \emph{conjugates} if they have the same set of successors.

A cycle in $B_n(q)$ is a path that starts and ends at the same vertex. It is said to be
\emph{vertex disjoint} if it does not visit any vertex more than once. Two cycles or two paths in
the digraph are vertex disjoint if they do not have a common vertex.

Following the notation of Lempel \cite{Lempel70}, a convenient representation of a vertex disjoint
cycle $(\textbf{x}^{(1)}; \dots ; \textbf{x}^{(l)})$ is the \emph{ring sequence} $[x^1, \dots , x^l
]$ of symbols from $\mathbb{Z}_q$ defined such that the $i$th vertex in the cycle starts with the
symbol $x^i$. A \emph{translate} of a word $\textbf{x} = (x_1, \dots , x_n)$ is a word $\textbf{x}+\lambda = (x_1 +
\lambda, \dots , x_n + \lambda)$ where $\lambda$ is any nonzero element in $\mathbb{Z}_q$ and addition is performed in $\mathbb{Z}_q$. We also define a translate of a cycle as the cycle
obtained by a translate of the ring sequence that defines this cycle.


A cycle is said to be \emph{primitive} in $B_n(q)$ if it does not simultaneously contain a word and
any of its translates. A function $d : \mathbb{Z}^n_q \rightarrow Z_ q$ is said to be translation
invariant if $d(\textbf{x} + \lambda) = d(\textbf{x})$ for all $\textbf{x} \in \mathbb{Z}^n_q$ and all $\lambda \in
\mathbb{Z}_q$. The weight $w(\textbf{x})$ of a word or sequence $\textbf{x}$ is the sum of all elements in $\textbf{x}$ (not
taken modulo $q$). Similarly, the weight of a cycle is the weight of the ring sequence that
represents it. Obviously a de Bruijn sequence of order $n$ defines a Hamiltonian cycle in $B_n(q)$,
i.e.\ a cycle that visits each vertex exactly once and which we call a \emph{de Bruijn cycle}. We write $w_q(\mathbf{x})$ for $w(\mathbf{x}) \bmod{q}$.

\begin{definition}
Let $G_1$ and $G_2$ be two digraphs and $u$ and $v$ be arbitrary nodes in $G_1$. A function $H$
with domain $G_1$ and codomain $G_2$ is said to be a \emph{graph homomorphism} if ($H(u)$, $H(v)$)
is an edge in $G_2$ whenever ($u$, $v$) is an edge in $G_1$.
\end{definition}

For an integer $n > 1$ define a map $D : B_n(2) \rightarrow  B_{n-1}(2)$ by
\[ D(a_1, \dots , a_n) = (a_1 + a_2, a_2 + a_3, \dots , a_{n-1} + a_n) \]
where addition is modulo 2. This function defines a graph homomorphism and is known as Lempel's
D-morphism since it was first introduced by Lempel, \cite{Lempel70}.

We present a generalisation to nonbinary alphabets, \cite{Alhakim11}.

\begin{definition} \label{Lempel}
For a nonzero $\beta \in \mathbb{Z}_q$, we define a function $D_{\beta}$ from $B_n(q)$ to
$B_{n-1}(q)$ as follows. For $a = (a_1, \dots , a_n)$ and $b = (b_1, \dots , b_{n-1}), \
D_{\beta}(a) = b$ if and only if $b_i = d_{\beta}(a_i , a_{i+1})$ for $i = 1$ to $n-1$, where
$d_{\beta}(a_i , a_{i+1}) = \beta(a_{i+1} - a_i) \mod q.$
\end{definition}

Clearly $D_{\beta}$ is translation invariant.
It is also onto under a simple condition that $gcd(\beta, q) = 1$.

\begin{lemma}[Alhakim and Akinwande, \cite{Alhakim11}]
Every vertex in $B_{n-1}(q)$ has exactly $q$ inverse images in $B_n(q)$ by $D_{\beta}$ if and only
if $\beta$ is coprime with $q$.
\end{lemma}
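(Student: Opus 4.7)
The plan is to prove both directions by directly analysing the system of equations defining a preimage. Fixing a target vertex $b=(b_1,\dots,b_{n-1}) \in B_{n-1}(q)$, a preimage $a=(a_1,\dots,a_n)$ under $D_\beta$ is, by Definition~\ref{Lempel}, precisely a solution of the system
\[
\beta(a_{i+1}-a_i) \equiv b_i \pmod{q}, \qquad i=1,\dots,n-1.
\]
Since the variables only appear through the consecutive differences $a_{i+1}-a_i$, it is natural to first choose $a_1 \in \mathbb{Z}_q$ and then determine, or count, the admissible differences.

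For the ``if'' direction, I would assume $\gcd(\beta,q)=1$, so that $\beta$ is a unit in $\mathbb{Z}_q$ with inverse $\beta^{-1}$. Then each equation rewrites uniquely as $a_{i+1} = a_i + \beta^{-1} b_i \pmod q$, so once $a_1$ is chosen, the whole tuple $a$ is determined. Letting $a_1$ range over $\mathbb{Z}_q$ therefore yields exactly $q$ distinct preimages, and these are easily seen to be pairwise distinct because they differ in their first coordinate.

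For the ``only if'' direction, I would argue by contrapositive: assume $d := \gcd(\beta,q) > 1$. The image of the map $x \mapsto \beta x$ on $\mathbb{Z}_q$ is the proper subgroup $d\mathbb{Z}_q$, of index $d$. Picking any $b_1 \in \mathbb{Z}_q \setminus d\mathbb{Z}_q$ (for instance $b_1 = 1$, which is not a multiple of $d$) and completing to a target $b \in B_{n-1}(q)$ arbitrarily, the very first equation $\beta(a_2-a_1) \equiv b_1$ has \emph{no} solution in $\mathbb{Z}_q$. Hence $b$ has zero preimages, contradicting the hypothesis that every vertex has exactly $q$ preimages.

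Neither direction presents a serious obstacle; the argument is essentially bookkeeping about when multiplication by $\beta$ is a bijection on $\mathbb{Z}_q$. The only point to watch is being explicit that in the non-coprime case the obstruction appears already at a single coordinate, so no hypothesis on $n$ beyond $n\ge 2$ is needed, and that in the coprime case the $q$ preimages produced by varying $a_1$ are genuinely distinct --- both of which follow immediately from the structure of the system.
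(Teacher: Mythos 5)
Your proof is correct. The paper itself gives no proof of this lemma --- it is quoted directly from Alhakim and Akinwande \cite{Alhakim11} --- so there is nothing to compare against; your argument (solving the system $a_{i+1}=a_i+\beta^{-1}b_i$ coordinate by coordinate when $\beta$ is a unit, and exhibiting a vertex with no preimage when $\gcd(\beta,q)>1$ because the first equation $\beta(a_2-a_1)\equiv b_1$ is unsolvable for $b_1\notin d\mathbb{Z}_q$) is the standard and complete justification of the claim.
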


\begin{theorem}[Alhakim and Akinwande, \cite{Alhakim11} and Ronse, \cite{Ronse84}]

Let $\Gamma$ be a vertex disjoint cycle of length $l$ in $B_{n-1}(q)$ and $D_{\beta}$ be as above,
where $\beta$ satisfies $gcd(\beta, q) = 1$. Then:
\begin{enumerate}
\item[(a)] $D_{\beta}$ is a graph homomorphism;
\item[(b)] The weight $W(\Gamma) \equiv \lambda \pmod q$, where $\lambda \neq 0$, if and only
    if $\Gamma$ is the image by $D_{\beta}$ of a vertex disjoint cycle of the form
\[ C = c \cdot (c + \beta^{-1}\lambda) \cdot (c + 2\beta^{-1}\lambda)  \dots (c + (r - 1)\beta^{-1}\lambda)  \]
obtained by concatenating $c$ with its translates, where $c = (x_1, \dots , x_l )$ is an
appropriate sequence of symbols of length $l$ and $r = q / gcd(\lambda, q)$; and
\item[(c)] $W(\Gamma) \equiv 0 \pmod q$ if and only if $\Gamma$ is the image by $D_{\beta}$ of
    $q$ primitive, vertex disjoint cycles of length $l$ in $B_n(q)$.
\end{enumerate}
\end{theorem}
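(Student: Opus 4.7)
My plan is to dispatch (a) by a direct calculation on the edge definitions, and then to treat (b) and (c) together by analysing how the preimage of a vertex-disjoint cycle under $D_\beta$ decomposes. For (a), I will take an edge $(\textbf{u}, \textbf{v}) = ((a_1, \ldots, a_n), (a_2, \ldots, a_{n+1}))$ in $B_n(q)$ and check that the $i$-th coordinate of $D_\beta(\textbf{v})$, namely $\beta(a_{i+2} - a_{i+1})$, equals the $(i+1)$-th coordinate of $D_\beta(\textbf{u})$; this forces $(D_\beta(\textbf{u}), D_\beta(\textbf{v}))$ to be an edge of $B_{n-1}(q)$.

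For (b) and (c), I will represent $\Gamma$ by its ring sequence $[y_1, \ldots, y_l]$ and observe that any preimage ring sequence $[x_1, x_2, \ldots]$ is forced to satisfy the recurrence $x_{i+1} = x_i + \beta^{-1} y_i$ (with $y$-indices taken cyclically mod $l$), which inverts $D_\beta$ coordinate by coordinate. Choosing any $x_1 \in \mathbb{Z}_q$ determines the rest of the sequence, and after one loop $x_{l+1} = x_1 + \beta^{-1} W(\Gamma) \pmod q$; hence the preimage cycle through $x_1$ has length $l$ times the additive order of $\beta^{-1} W(\Gamma)$ in $\mathbb{Z}_q$. Since $D_\beta$ is translation invariant, translating any preimage cycle by $\mu \in \mathbb{Z}_q$ yields another preimage cycle, and the preceding lemma guarantees that the preimage cycles collectively cover all $ql$ preimages of vertices of $\Gamma$.

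In the case $W(\Gamma) \equiv 0 \pmod q$, every orbit closes after $l$ steps, producing the $q$ cycles $C, C+1, \ldots, C+(q-1)$, which together cover exactly $ql$ vertices and so must be pairwise vertex-disjoint; primitivity of each $C$ is then equivalent to this disjointness, since $C$ contains a word and its translate by $\mu \neq 0$ precisely when $V(C) \cap V(C+\mu) \neq \emptyset$. In the case $W(\Gamma) \equiv \lambda \not\equiv 0$, the additive order of $\beta^{-1}\lambda$ is $r = q/\gcd(\lambda, q)$; I will set $c = (x_1, \ldots, x_l)$ and use translation invariance inductively to show $x_{kl+j} = x_j + k\beta^{-1}\lambda$ for all $0 \le k < r$ and $1 \le j \le l$, giving the claimed concatenated form. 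The converse implications follow easily by reading $W(\Gamma)$ off the proposed preimage structure. The main obstacle will be packaging cleanly the equivalence in (c) between vertex-disjointness of the $q$ preimage cycles and primitivity of each cycle, which hinges on the observation that the translates of a preimage cycle are themselves exactly the other preimage cycles.
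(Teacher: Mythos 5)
The paper offers no proof of this theorem at all --- it is quoted as a known result of Alhakim--Akinwande \cite{Alhakim11} and Ronse \cite{Ronse84} --- so there is no in-paper argument to compare against; I can only assess your reconstruction on its own merits. It is essentially the standard lifting argument and it is sound: part (a) is the right coordinate check on edges, and the recurrence $x_{i+1}=x_i+\beta^{-1}y_i$ with $x_{l+1}=x_1+\beta^{-1}W(\Gamma)$ is exactly the mechanism driving (b) and (c). Two points should be made explicit in a full write-up. First, when you assert that the lifted cycle through $x_1$ has length $l$ times the additive order of $\beta^{-1}W(\Gamma)$, you must rule out the lifted walk closing after a number of steps that is not a multiple of $l$; this follows because applying $D_\beta$ to such an early return would force a repeated vertex in $\Gamma$, contradicting that $\Gamma$ is vertex disjoint. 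Second, the counting step ``the cycles cover exactly $ql$ vertices and so are pairwise disjoint'' needs the supporting observations that preimages of distinct vertices of $\Gamma$ are distinct and that the fibre values $x_j+k\beta^{-1}\lambda$ above a fixed position of $\Gamma$ are pairwise distinct (because $\beta^{-1}\lambda$ has additive order $r$, and $\gcd(\beta,q)=1$ ensures this order equals $q/\gcd(\lambda,q)$). With those additions the argument is complete; your reduction of primitivity in (c) to pairwise disjointness of the translate cycles, via the fact that the translates of one lifted cycle are precisely the other lifted cycles, is also the right move.
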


\section{Orientable sequences}

\begin{definition}
We define an $n$-window sequence $S = (s_i)$ (see, for example, \cite{Mitchell96}) to be a periodic
sequence of period $m$ with the property that no $n$-tuple appears more than once in a period of
the sequence, i.e.\ with the property that if $\mathbf{s}_n(i) = \mathbf{s}_n(j)$ for some $i,j$, then $i \equiv j \pmod
m$, where $\mathbf{s}_n(i) = (s_i, s_{i+1}, \dots , s_{i+n-1})$.
\end{definition}

A de Bruijn sequence of order $n$ over the alphabet $\mathbb{Z}_q$ is then simply an $n$-window
sequence of period $q^n$ (i.e.\ of maximal period), and has the property that every possible
$n$-tuple appears once in a period. Since we are interested in tuples occurring either forwards or
backwards in a sequence we also introduce the notion of a \emph{reversed} tuple, so that if $\mathbf{u} = (u_0, u_1,
\ldots, u_{n-1})$ is a $q$-ary $n$-tuple, i.e.\ if $\mathbf{u} \in B_n(q)$, then $\mathbf{u}^R = (u_{n-1}, u_{n-2},
\ldots, u_0)$ is its reverse. If a tuple $\mathbf{u}$ satisfies $\mathbf{u} = \mathbf{u}^R$ then we say it is symmetric. A
\emph{translate} of a tuple involves switching $\mathbf{u} = (u_0,u_1, \ldots, u_{n-1}) \in B_n(q)$ to
$\mathbf{u}+\lambda  = (u_0 + \lambda, u_1 + \lambda, \ldots, u_{n-1} + \lambda)$, where $\lambda \in
\mathbb{Z}_q$. In a similar way, we refer to sequences being \emph{translates} if one can be obtained from
the other by the addition of a nonzero constant $\lambda$. We define a \emph{conjugate} of an $n$-tuple
to be a tuple obtained by adding $\lambda$ to the first element, i.e.\ if $\mathbf{u} = (u_0, u_1, \ldots, u_{n-1}) \in
B_n(q)$, then a conjugate $\widehat{\mathbf{u}}$ of $\mathbf{u}$ is an $n$-tuple $(u_0 + \lambda, u_1, \ldots, u_{n-1})$, where $\lambda \in \mathbb{Z}_q$.

Two $n$-window sequences $S = (s_i)$ and $T = (t_i)$ are said to be \emph{disjoint} if they do not share
an $n$-tuple, i.e.\ if $\mathbf{s}_n(i) \neq \mathbf{t}_n(j)$ for every $i, j$. An $n$-window sequence is said to be
\emph{primitive} if it is disjoint from each of its non-zero translates. We next give a well known result showing how two
disjoint $n$-window sequences can be \emph{joined} to create a single $n$-window sequence, if they
contain conjugate $n$-tuples.

\begin{lemma}
Suppose $S = (s_i)$ and $T = (t_i)$ are disjoint
$n$-window sequences of periods $l$ and $m$ respectively. Moreover
suppose $S$ and $T$ contain the conjugate $n$-tuples $\mathbf{u}$ and $\mathbf{v}$ at
positions $i$ and $j$, respectively. Then
\[  [s_0, s_1, \dots , s_{i+n-1}, t_{j+n}, t_{j+n+1}, \dots , t_{m-1},t_0, \dots , t_{j+n-1}, s_{i+n}, s_{i+n+1}, \dots , s_{l-1}]\]
is a ring sequence for an $n$-window sequence of period $l + m$.
\end{lemma}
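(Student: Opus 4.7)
The plan is to show that the multiset of $n$-tuples appearing as consecutive windows in the proposed ring sequence $R$ is exactly the disjoint union of the $n$-tuples of $S$ and of $T$; since $S$ contributes $l$ distinct $n$-tuples (by its $n$-window property), $T$ contributes $m$, and the two collections are disjoint by hypothesis, this gives $l+m$ pairwise distinct tuples in a sequence of length $l+m$, which is exactly the $n$-window property at the stated period. First I would fix notation. Write $\mathbf{u}=(s_i,s_{i+1},\ldots,s_{i+n-1})$ and $\mathbf{v}=(t_j,t_{j+1},\ldots,t_{j+n-1})$; the conjugacy assumption says that for some $\lambda\in\mathbb{Z}_q$ (necessarily nonzero, since $S$ and $T$ are disjoint and $\mathbf{u}=\mathbf{v}$ would contradict this) one has $t_j=s_i+\lambda$ and $t_{j+k}=s_{i+k}$ for $k=1,\ldots,n-1$. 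A direct count of the four contiguous blocks in the displayed ring sequence verifies that $R$ has length $l+m$.

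Next I would walk the sliding window $\mathbf{r}_n(p)$ through one period, identifying the tuple at each position in three ranges. For $p\in\{0,\ldots,i\}$ the window lies entirely inside the initial block $s_0,\ldots,s_{i+n-1}$, so $\mathbf{r}_n(p)=\mathbf{s}_n(p)$. For $p\in\{i+1,\ldots,i+m\}$ the window starts in the tail of that initial block and extends into the inserted $T$-material (possibly wrapping across the splice $t_{m-1}\to t_0$, which is handled by the periodicity of $T$). The crucial computation happens at the first boundary: for $p=i+k$ with $1\le k\le n-1$ the window is
\[ (s_{i+k},\ldots,s_{i+n-1},\,t_{j+n},t_{j+n+1},\ldots,t_{j+n+k-1}), \]
and the conjugacy identity $s_{i+r}=t_{j+r}$ for $1\le r\le n-1$ converts the initial $s$-coordinates into $t$-coordinates, giving $\mathbf{r}_n(p)=\mathbf{t}_n(j+k)$. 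The identification $\mathbf{r}_n(p)=\mathbf{t}_n(j+p-i)$ then persists throughout this range, and at $p=i+m$ we recover $\mathbf{t}_n(j)=\mathbf{v}$. Finally, for $p\in\{i+m+1,\ldots,l+m-1\}$ the symmetric analysis at the second boundary, applying the same conjugacy identity in the reverse direction, yields $\mathbf{r}_n(p)=\mathbf{s}_n(p-m)$.

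Combining the three ranges, the windows of $R$ traverse, in order, $\mathbf{s}_n(0),\ldots,\mathbf{s}_n(i),\mathbf{t}_n(j+1),\ldots,\mathbf{t}_n(j+m-1),\mathbf{t}_n(j),\mathbf{s}_n(i+1),\ldots,\mathbf{s}_n(l-1)$: every $n$-tuple of $S$ and every $n$-tuple of $T$, each exactly once. The within-sequence lists are repetition-free by the $n$-window property of $S$ and $T$, and the two lists are mutually disjoint by hypothesis, so all $l+m$ windows of $R$ are pairwise distinct, as required. The only real obstacle is bookkeeping: one must carefully track the window's position relative to the four blocks and verify that the conjugacy hypothesis is precisely what makes both splices compatible — at the first boundary the last $n-1$ coordinates of $\mathbf{u}$ coincide with those of $\mathbf{v}$, so dropping $s_i$ and appending $t_{j+n}$ seamlessly turns the window on $\mathbf{u}$ into the successor window of $\mathbf{v}$ in $T$, and the second boundary is the mirror image of this transition.
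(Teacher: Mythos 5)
Your proof is correct: the window-by-window trace through the three ranges is accurate, the conjugacy identity $t_{j+k}=s_{i+k}$ for $1\le k\le n-1$ is applied exactly where it is needed at the two splice points, and the conclusion that the $l+m$ windows of the joined ring sequence are precisely the windows of $S$ and $T$ (each occurring once, pairwise distinct by the window property and disjointness) establishes both the $n$-window property and that the period is exactly $l+m$. The paper states this lemma as a well-known result and gives no proof, so there is nothing to contrast; your argument is the standard direct verification one would supply.
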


\begin{definition}
An $n$-window sequence $S = (s_i)$ of period $m$ is said to be an orientable sequence of order $n$
(an $\mathcal{OS}_q(n)$) if, for any $i,j$, $\mathbf{s}_n(i) \neq \mathbf{s}_n(j)^R$.
\end{definition}

\begin{definition}
A pair of disjoint orientable sequences of
order $n, \ S = (s_i)$ and $S' = (s'_i)$, are said to be orientable disjoint
(or simply $o$-disjoint) if, for any $i, j, \ \mathbf{s}_n(i) \neq \mathbf{s}_n' (j)^R.$
\end{definition}

We extend the notation to allow the Lempel morphism $D_{\beta}$ to be applied to periodic sequences in the natural way, as we now describe. That is, $D_{\beta}$ (where $\beta\in \mathbb{Z}_q$) is the map from the set of periodic sequences to itself defined by
\[ D((s_i))= \{(t_i): t_j=\beta(s_{j+1}-s_{j}) \}. \]
The
image of a sequence of period $m$ will clearly have period dividing $m$. In the usual way we can
define $D_{\beta}^{-1}$ to be the \emph{inverse} of $D_{\beta}$, i.e.\ if $S$ is a periodic
sequence than $D_{\beta}^{-1}(S)$ is the set of all sequences $T$ with the property that
$D_{\beta}(T) = S$. The weight $w(S)$ of $S$ is the weight of the ring sequence corresponding to $S$. Similarly we write $w_q(S)$ for $w(S) \bmod{q}$.

We have the following result.

\begin{theorem}[Alhakim and Akinwande, Theorem 3.2 of \cite{Alhakim11}] \label{D-beta}
Let the sequence $S$ be a $n$-window sequence with period $m$. If $gcd(w(S), q) = g$ then
$D^{-1}_{\beta}(S)$ consists of $g$ disjoint cycles each of period $\frac{q}{g}m$. In particular:
\begin{itemize}
    \item if $g = q$ (i.e. $w(S) = 0 \mod q$) then $D^{-1}_{\beta}(S)$ consists of $q$ disjoint
        cycles that are translates of each other; and
    \item if $g = 1$ then $D^{-1}_{\beta}(S)$ is one cycle of length $qm$.
\end{itemize}
\end{theorem}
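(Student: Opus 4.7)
The plan is to construct all preimages under $D_\beta$ explicitly by \emph{integration}. Since $\gcd(\beta,q)=1$, the defining relation $t_j = \beta(s_{j+1} - s_j)$ can be solved as $s_{j+1} = s_j + \beta^{-1} t_j$, so once we choose an initial value $s_0 = c \in \mathbb{Z}_q$, the entire preimage sequence is determined. This immediately produces exactly $q$ candidate preimage sequences, one for each $c$.

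Next I would compute how far each preimage drifts after one period of $S$. Summing the relation above, the sequence starting at $c$ reaches $c + \beta^{-1} w(S)$ after $m$ steps, and more generally is at $c + k\beta^{-1} w(S)$ after $km$ steps. Hence the preimage closes back to $c$ after exactly $km$ steps for the smallest $k$ with $k\beta^{-1} w(S) \equiv 0 \pmod q$; since $\gcd(\beta,q)=1$, this is $k = q/\gcd(w(S),q) = q/g$. So each preimage has cyclic length at most $(q/g)m$, and because $D_\beta$ commutes with shifts and $S$ itself has period $m$, any period of a preimage must be a multiple of $m$; combined with the previous bound this forces the minimal period to be exactly $(q/g)m$.

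To count the cycles, observe that the preimage starting at $c'$ coincides with the one starting at $c$ precisely when $c' - c \in \langle \beta^{-1} w(S)\rangle$, a cyclic subgroup of $\mathbb{Z}_q$ of order $q/g$. The $q$ initial values therefore partition into $q/(q/g) = g$ cycles, and these cycles are pairwise disjoint as sequences since any shared $n$-tuple would force the two corresponding starts to lie in the same coset. The two special cases then drop out: if $g=q$ the drift is zero, so there are $q$ cycles each of period $m$, and by construction any two differ by a nonzero constant, i.e.\ they are translates; if $g=1$, a single cycle of period $qm$ absorbs all $q$ choices of $c$.

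The only delicate point is the minimality claim for the period $(q/g)m$, and this is what I would set out most carefully; the rest is bookkeeping on the cyclic subgroup generated by $\beta^{-1}w(S)$ in $\mathbb{Z}_q$.
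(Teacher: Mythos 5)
Your argument is correct: the paper states this result only as a citation of Alhakim and Akinwande without reproducing a proof, but your integration argument --- $t_{i+\alpha m}=t_i+\alpha\beta^{-1}w(S)$, the additive order $q/g$ of the drift, and the coset count of the $q$ starting values --- is exactly the technique the paper itself deploys in proving the analogous Theorem~\ref{thm Lempel-orientable}, so this is essentially the same approach. The only blemishes are notational (you momentarily swap the roles of $(s_i)$ and $(t_i)$ when inverting the defining relation, and ``shared $n$-tuple'' should read $(n+1)$-tuple); neither affects the substance.
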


Note that a further interesting extension of the Lempel Homomorphism has been described in recent work by Nellore and Ward \cite{Nellore22}.

\section{An upper bound} \label{section-bound}

We first introduce a special type of symmetry for $q$-ary $n$-tuples.

\begin{definition}
An $n$-tuple $\mathbf{u} = (u_0, u_1, \dots , u_{n-1}), \ u_i \in \mathbb{Z}_q \ (0 \leq i \leq n - 1)$, is $m$-symmetric for some $m \leq n$ if and only if $u_i = u_{m-1-i}$ for every $i \ (0 \leq i \leq m - 1)$.
\end{definition}

An $n$-tuple is simply said to be symmetric if it is $n$-symmetric. We also need the notions of uniformity and alternating.

\begin{definition}
An $n$-tuple $\mathbf{u}=(u_0,u_1,\ldots,u_{n-1})$, $u_i\in\mathbb{Z}_q$ ($0\leq i\leq n-1$), is
\emph{uniform} if and only if $u_i=c$ for every $i$ ($0\leq i\leq n-1$) for some
$c\in\mathbb{Z}_q$. An $n$-tuple $\mathbf{u}=(u_0,u_1,\ldots,u_{n-1})$, $u_i\in\mathbb{Z}_q$ ($0\leq i\leq n-1$), is \emph{alternating} if and only if $u_0=u_{2i}$ and $u_1=u_{2i+1}$ for every $i$ ($0\leq i\leq \lfloor(n-1)/2\rfloor$), where $u_0\not=u_1$.
\end{definition}

We can then state the following elementary results.

\begin{lemma} \label{lemma_symmetry}
If $n\geq2$ and $\mathbf{u}=(u_0,u_1,\ldots,u_{n-1})$ is a $q$-ary $n$-tuple that is both symmetric
and $(n-1)$-symmetric, then $\mathbf{u}$ is uniform.
\end{lemma}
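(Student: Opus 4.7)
The plan is to unpack the two hypotheses as index equations and combine them to force every consecutive pair of entries of $\mathbf{u}$ to be equal. From symmetry we have $u_i = u_{n-1-i}$ for $0\leq i\leq n-1$, and from $(n-1)$-symmetry we have $u_i = u_{n-2-i}$ for $0\leq i\leq n-2$. For any index $i$ in the range $0\leq i\leq n-2$, both equations apply, so $u_{n-1-i}=u_i=u_{n-2-i}$.

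Setting $j=n-2-i$, so that $j$ ranges over $0,1,\ldots,n-2$ as $i$ does, the identity $u_{n-1-i}=u_{n-2-i}$ becomes $u_{j+1}=u_j$. Thus all consecutive entries of $\mathbf{u}$ agree, and by induction $u_0=u_1=\cdots=u_{n-1}$, which is precisely the statement that $\mathbf{u}$ is uniform.

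I do not anticipate any genuine obstacle: the lemma is essentially a one-line index manipulation, and the only thing to watch is that the range $0\leq i\leq n-2$ where both symmetry relations are simultaneously valid is non-empty, which is guaranteed by the hypothesis $n\geq 2$.
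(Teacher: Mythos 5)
Your proof is correct and takes essentially the same route as the paper: both combine the symmetry and $(n-1)$-symmetry relations at a common index to conclude $u_j=u_{j+1}$ for all $j$ in $\{0,\ldots,n-2\}$, differing only in a trivial reindexing.
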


\begin{proof} Choose any $i$, $0\leq i\leq n-2$. Then, by symmetry we know that $u_{i+1}=u_{n-2-i}$ (observing that $i\leq n-2$).  Also, by $(n-1)$-symmetry we know that $u_i=u_{n-2-i}$, and hence we have
$u_i=u_{i+1}$.  The result follows. \qed
\end{proof}

\begin{lemma} \label{lemma_symmetry_2}
If $n\geq2$ and $\mathbf{u}=(u_0,u_1,\ldots,u_{n-1})$ is a $q$-ary $n$-tuple that is both symmetric
and $(n-2)$-symmetric then:
\begin{itemize}
\item if $n$ is even then $\mathbf{u}$ is uniform, and
\item if $n$ is odd then $\mathbf{u}$ is either uniform or alternating.
\end{itemize}
\end{lemma}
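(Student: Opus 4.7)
The plan is to combine the two symmetry hypotheses to get a strong ``period 2'' condition on the entries of $\mathbf{u}$, and then to extract the remaining content from the full symmetry condition depending on the parity of $n$.

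First I would unpack the definitions. Symmetry gives $u_i = u_{n-1-i}$ for all $i$ with $0 \leq i \leq n-1$, while $(n-2)$-symmetry gives $u_i = u_{n-3-i}$ for all $i$ with $0 \leq i \leq n-3$. For any $i$ in the range $0 \leq i \leq n-3$, I can apply both relations to obtain
\[ u_{n-1-i} \;=\; u_i \;=\; u_{n-3-i}, \]
and then reindex by setting $j = n-3-i$ (so $j$ ranges over $0,1,\ldots,n-3$) to obtain $u_{j} = u_{j+2}$ for every such $j$. Hence all entries of $\mathbf{u}$ with the same parity of index agree: $u_0 = u_2 = u_4 = \cdots$ and $u_1 = u_3 = u_5 = \cdots$. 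In particular, $\mathbf{u}$ is either uniform (if $u_0 = u_1$) or alternating (if $u_0 \neq u_1$), which already proves the odd case of the lemma.

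For the even case, I would go back to the symmetry relation and apply it at $i=0$, giving $u_0 = u_{n-1}$. Since $n$ is even, $n-1$ is odd, so by the parity observation above $u_{n-1} = u_1$. Therefore $u_0 = u_1$ and $\mathbf{u}$ is uniform. (By contrast, when $n$ is odd, $n-1$ is even and this instance of symmetry reduces to the tautology $u_0 = u_0$, which is why the alternating case survives.)

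There is no real obstacle: the whole proof is a routine manipulation of indices, and once the ``period 2'' conclusion $u_j = u_{j+2}$ is in hand, the split according to the parity of $n$ is immediate. The only place one has to be mildly careful is keeping track of the ranges of validity of each symmetry relation so that the reindexing step is legal for every $j \in \{0,1,\ldots,n-3\}$.
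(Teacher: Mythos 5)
Your proof is correct and follows essentially the same route as the paper: combine the two symmetry conditions to get $u_j=u_{j+2}$, then use one more instance of full symmetry to force $u_0=u_1$ when $n$ is even. The only cosmetic difference is that you close the even case via the endpoints ($u_0=u_{n-1}=u_1$) whereas the paper uses the two middle entries; both are equivalent.
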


\begin{proof}
Choose any $i$, $0\leq i\leq n-3$. Then, by symmetry we know that $u_{i+2}=u_{n-3-i}$ (observing that $i\leq n-3$).  Also, by $(n-2)$-symmetry we know that $u_i=u_{n-3-i}$, and hence we have
$u_i=u_{i+2}$.  Hence $\mathbf{u}$ is either uniform or alternating.  To complete the proof, suppose $n$ is even; let $n=2m$. By symmetry we know that $u_{m+1}=u_m$, and hence $\mathbf{u}$ is uniform. \qed
\end{proof}

The following definition leads to a simple upper bound on the period of an $\mathcal{OS}_q(n)$.

\begin{definition}
Let $N_q(n)$ be the set of all non-symmetric $q$-ary $n$-tuples.
\end{definition}

Clearly, if an $n$-tuple occurs in an $\mathcal{OS}_q(n)$ then it must belong to $N_q(n)$; moreover
it is also immediate that $|N_q(n)|=q^n-q^{\lceil n/2\rceil}$.  Observing that all the tuples in
 $\mathcal{OS}_q(n)$ and its reverse must be distinct, this immediately give the following
well-known result.

\begin{lemma}[\cite{Burns93}, Lemma 15] The period of an $\mathcal{OS}_q(n)$ is at most $(q^n-q^{\lceil
n/2\rceil})/2$.
\end{lemma}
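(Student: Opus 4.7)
The plan is to prove the bound by a direct counting argument based on two observations: every $n$-tuple appearing in an $\mathcal{OS}_q(n)$ must be non-symmetric, and the set of $n$-tuples of $S$ together with the set of their reverses together form $2m$ distinct elements of $N_q(n)$.

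First I would show that any $\mathbf{s}_n(i)$ occurring in $S$ lies in $N_q(n)$. This follows by specialising the defining orientability condition $\mathbf{s}_n(i)\neq\mathbf{s}_n(j)^R$ to the case $i=j$, which forces $\mathbf{s}_n(i)\neq\mathbf{s}_n(i)^R$, i.e.\ $\mathbf{s}_n(i)$ is non-symmetric. At the same time I would confirm the stated count $|N_q(n)|=q^n-q^{\lceil n/2\rceil}$ by observing that a symmetric $n$-tuple is uniquely determined by its first $\lceil n/2\rceil$ entries, giving exactly $q^{\lceil n/2\rceil}$ symmetric $n$-tuples over $\mathbb{Z}_q$.

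Next, because $S$ is (in particular) an $n$-window sequence of period $m$, the tuples $\mathbf{s}_n(0),\mathbf{s}_n(1),\ldots,\mathbf{s}_n(m-1)$ are pairwise distinct; since reversal is a bijection on $B_n(q)$, their reverses $\mathbf{s}_n(0)^R,\ldots,\mathbf{s}_n(m-1)^R$ are also pairwise distinct; and the orientability condition directly rules out any coincidence between a forward tuple and a reversed tuple. Hence all $2m$ of these $n$-tuples are distinct and all lie in $N_q(n)$, yielding $2m\leq q^n-q^{\lceil n/2\rceil}$ and so the claimed bound on $m$. There is no real technical obstacle: the argument is just pigeonhole, and the lemma essentially formalises the remarks made immediately before its statement.
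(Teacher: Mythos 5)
Your argument is correct and is exactly the one the paper intends: the paper gives no formal proof, only the preceding remarks that every tuple occurring in $S$ is non-symmetric, that $|N_q(n)|=q^n-q^{\lceil n/2\rceil}$, and that the tuples of $S$ and $S^R$ are all distinct, which is precisely the counting you carry out. Your write-up simply makes these steps explicit (including the specialisation $i=j$ of the orientability condition and the count of symmetric tuples), so there is nothing to add.
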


As a first step towards establishing our bound we need to define a special set of $n$-tuples, as
follows.

\begin{definition}
Suppose $n\geq 2$, and that $\mathbf{v}=(v_0,v_1,\ldots,v_{n-r-1})$ is a $q$-ary $(n-r)$-tuple ($r\geq 1$).  Then let $L_n(\mathbf{v})$ be the following set of $q$-ary $n$-tuples:
\[  L_n(\mathbf{v}) = \{ \mathbf{u}=(u_0,u_1,\ldots,u_{n-1}):~
u_i=v_i,~~0\leq i\leq n-r-1 \}. \]
\end{definition}

That is, $L_n(\mathbf{v})$ is simply the set of $n$-tuples whose first $n-r-1$ entries equal
$\mathbf{v}$.  Clearly, for fixed $r$, the sets $L_n(\mathbf{v})$ for all $(n-r)$-tuples $\mathbf{v}$ are disjoint. We have the following simple result.

\begin{lemma} \label{lemma_disjoint}
Suppose $\mathbf{v}$ and $\mathbf{w}$ are symmetric tuples of lengths $n-1$ and $n-2$, respectively, and they are not both uniform.  Then
\[ L_n(\mathbf{v}) \cap L_n(\mathbf{w}) = \emptyset. \]
\end{lemma}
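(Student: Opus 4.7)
The plan is to argue by contradiction, assuming there is some $\mathbf{u}=(u_0,\ldots,u_{n-1})$ in $L_n(\mathbf{v})\cap L_n(\mathbf{w})$, and showing that this forces both $\mathbf{v}$ and $\mathbf{w}$ to be uniform. Unpacking the definitions, such a $\mathbf{u}$ must satisfy $u_i=v_i$ for $0\leq i\leq n-2$ and $u_i=w_i$ for $0\leq i\leq n-3$, so in particular $v_i=w_i$ for $0\leq i\leq n-3$. That is, the first $n-2$ entries of $\mathbf{v}$ form the tuple $\mathbf{w}$. Using the assumed symmetry of $\mathbf{w}$, namely $w_i=w_{n-3-i}$ for $0\leq i\leq n-3$, I can then transfer this relation to $\mathbf{v}$ to obtain $v_i=v_{n-3-i}$ for $0\leq i\leq n-3$, which is precisely the statement that the $(n-1)$-tuple $\mathbf{v}$ is $(n-2)$-symmetric.

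Having established this, the conclusion is immediate from Lemma~\ref{lemma_symmetry} applied to the $(n-1)$-tuple $\mathbf{v}$: since $\mathbf{v}$ is both symmetric (as an $(n-1)$-tuple) and $(n-2)$-symmetric, the lemma says $\mathbf{v}$ is uniform, provided $n-1\geq 2$, i.e.\ $n\geq 3$. Uniformity of $\mathbf{v}$ forces uniformity of $\mathbf{w}$, since $\mathbf{w}$ is an initial segment of $\mathbf{v}$, contradicting the hypothesis. The remaining small cases are vacuous: for $n\leq 3$ any symmetric tuple of length $n-1$ is automatically uniform (length $0$ or $1$, or $(a,a)$), and so is any tuple of length $n-2$, so the hypothesis "not both uniform" is never satisfied. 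I do not anticipate any real obstacle here; the argument is essentially a bookkeeping exercise that reduces, via the intersection condition, to a previously established symmetry lemma, and the only delicate point is correctly shifting the indices when transferring the symmetry of $\mathbf{w}$ onto $\mathbf{v}$.
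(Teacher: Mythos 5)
Your proof is correct and follows essentially the same route as the paper: observe that a common element of $L_n(\mathbf{v})$ and $L_n(\mathbf{w})$ forces $\mathbf{w}$ to be the initial segment of $\mathbf{v}$, so the symmetric $(n-1)$-tuple $\mathbf{v}$ is also $(n-2)$-symmetric, and Lemma~\ref{lemma_symmetry} then makes both tuples uniform, a contradiction. Your extra care with the index bookkeeping and the vacuous small-$n$ cases is fine but not a substantive difference.
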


\begin{proof}
Suppose $\mathbf{u}\in L_n(\mathbf{v}) \cap L_n(\mathbf{w})$; this implies that $\mathbf{v}\in L_{n-1}(\mathbf{w})$. Hence $\mathbf{v}$ is both $(n-1)$-symmetric and $(n-2)$-symmetric; so (from Lemma~\ref{lemma_symmetry}), $\mathbf{v}$ and $\mathbf{w}$ are both uniform, giving the desired contradiction.
\qed
\end{proof}

We are particularly interested in how the sets $L_n(\mathbf{v})$ intersect with the sets of $n$-tuples occurring in either $S$ or $S^R$, when $S$ is an $\mathcal{OS}_q(n)$ and $\mathbf{v}$ is symmetric.  To this end we make the following definition.

\begin{definition}
Suppose $n\geq 2$, $r\geq 1$, $S=(s_i)$ is an $\mathcal{OS}_q(n)$, and $\mathbf{v}=(v_0,v_1,\ldots,v_{n-r-1})$ is a $q$-ary $(n-r)$-tuple.  Then let
\[ L_S(\mathbf{v})=\{\mathbf{u}\in L_n(\mathbf{v}):~
 \mathbf{u}~\mbox{appears in}~S~\mbox{or}~S^R \}.\]
\end{definition}

We can now state the first result towards deriving our bound.

\begin{lemma} \label{lemma_even}
Suppose $n\geq 2$, $r\geq 1$, $S=(s_i)$ is an $\mathcal{OS}_q(n)$, and $\mathbf{v}=(v_0,v_1,\ldots,v_{n-r-1})$ is a $q$-ary symmetric $(n-r)$-tuple.  Then $|L_S(\mathbf{v})|$ is even.
\end{lemma}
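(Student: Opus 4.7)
The plan is to decompose $L_S(\mathbf{v})$ as a disjoint union of two sets of equal cardinality, by splitting according to whether a tuple appears in $S$ or in $S^R$. Concretely, set
\[ A = \{\mathbf{u}\in L_S(\mathbf{v}) : \mathbf{u}\ \text{appears in}\ S\},\qquad B = \{\mathbf{u}\in L_S(\mathbf{v}) : \mathbf{u}\ \text{appears in}\ S^R\}. \]
By orientability of $S$, no $n$-tuple can appear in both $S$ and $S^R$ (taking $i=j$ in the definition already rules out a tuple equal to its own reverse, and more generally the condition $\mathbf{s}_n(i)\neq \mathbf{s}_n(j)^R$ prevents any coincidence), so $A\cap B=\emptyset$ and $|L_S(\mathbf{v})|=|A|+|B|$.

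Next I would translate these sizes into counts of occurrences of the $(n-r)$-tuple $\mathbf{v}$. Because $S$ is $n$-window, distinct positions in one period yield distinct $n$-windows; hence $|A|$ equals the number of positions $i$ in a period of $S$ for which $\mathbf{s}_{n-r}(i)=\mathbf{v}$, i.e.\ the number of occurrences of $\mathbf{v}$ in one period of $S$. The analogous statement for $S^R$ gives that $|B|$ equals the number of occurrences of $\mathbf{v}$ in one period of $S^R$.

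Finally I would use the symmetry of $\mathbf{v}$. A straightforward change of index shows that each occurrence of $\mathbf{v}$ at position $j$ in $S^R$ corresponds to an occurrence of $\mathbf{v}^R$ at a determined position $k$ in $S$, and this correspondence is a bijection. Since $\mathbf{v}$ is symmetric we have $\mathbf{v}^R=\mathbf{v}$, so the number of occurrences of $\mathbf{v}$ in $S^R$ equals the number of occurrences of $\mathbf{v}$ in $S$; that is, $|B|=|A|$. Therefore $|L_S(\mathbf{v})|=2|A|$, which is even. The only step requiring care is the coordinate-reversal bookkeeping that identifies occurrences of $\mathbf{v}$ in $S^R$ with occurrences of $\mathbf{v}^R$ in $S$; everything else is an immediate consequence of the $n$-window property together with orientability.
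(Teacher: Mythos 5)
Your proof is correct and follows essentially the same route as the paper's: both arguments use the symmetry of $\mathbf{v}$ to set up a reversal correspondence between occurrences in $S$ and occurrences in $S^R$, and use orientability to see that the two resulting subsets of $L_S(\mathbf{v})$ are disjoint and of equal size. Your version merely makes the counting of positions via the $n$-window property more explicit than the paper does.
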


\begin{proof}
Suppose $L_S(\mathbf{v})$ is non-empty.  Then, by definition, and (without loss of generality)
assuming that an element of $L_n(\mathbf{v})$ occurs in $S$ (as opposed to $S^R$), we know that
$v_i=s_{j+i}$ for some $j$ ($0\leq i\leq n-r-1$).  Since $\mathbf{v}$ is symmetric, it follows
immediately that $v_i=s_{j+n-r-1-i}$ ($0\leq i\leq n-r-1$).  That is, for each occurrence of an element
of $L_n(\mathbf{v})$ in $S$, there is an occurrence of a (necessarily distinct) element of
$L_n(\mathbf{v})$ in $S^R$, and vice versa.  The result follows. \qed
\end{proof}

That is, if $|L_n(\mathbf{v})|$ is odd, this shows that $S$ and $S^R$ combined must omit at least one
of the $n$-tuples in $L_n(\mathbf{v})$.  We can now state our main result.

Observe that, although the theorem below applies in the case $q=2$, the bound is much weaker than
the bound of Dai et al.\ \cite{Dai93} which is specific to the binary case. This latter bound uses
arguments that only apply for $q=2$. The fact that $q=2$ is a special case can be seen by observing
that, unlike the case for larger $q$, no string of $n-2$ consecutive zeros or ones can occur in an
$\mathcal{OS}_2(n)$. The result can be seen as a generalisation of \cite{Dai93}, Theorem 2.2.

\begin{theorem} \label{thm_bounds}
Suppose that $S=(s_i)$ is an $\mathcal{OS}_q(n)$ ($q\geq2$, $n\geq 2$). Then the period of $S$ is
at most
\begin{eqnarray*}
(q^n - q^{\lceil n/2\rceil} - q^{\lceil (n-1)/2\rceil} + q)/2 & \mbox{if $q$ is odd}, \\
(q^n - q^{\lceil n/2\rceil} - q)/2 & \mbox{if $q$ is even}.
\end{eqnarray*}
Further, if $q$ is odd and $n\geq 6$ then the period of $S$ is at most
\begin{eqnarray*}
(q^n - 2q^{n/2} - q^{(n-2)/2} + 2q)/2 & \mbox{if $n$ is even}, \\
(q^n - q^{(n+1)/2} - 2q^{(n-1)/2} + q + q^2)/2 & \mbox{if $n$ is odd}.
\end{eqnarray*}
\end{theorem}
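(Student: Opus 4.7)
The plan is to bound $|T|$, where $T$ is the set of $n$-tuples appearing in either $S$ or $S^R$. Because $S$ is orientable the $n$-tuples of $S$ and those of $S^R$ are all distinct, so $|T|=2m$ where $m$ is the period of $S$, and the trivial inclusion $T\subseteq N_q(n)$ gives $|T|\leq q^n-q^{\lceil n/2\rceil}$. The sets $L_n(\mathbf{w})$, as $\mathbf{w}$ ranges over all $(n-1)$-tuples, partition the $n$-tuples into blocks of size $q$, and by Lemma~\ref{lemma_even} each block indexed by a symmetric $\mathbf{w}$ contributes an even number of elements to $T$. A symmetric $(n-1)$-tuple $\mathbf{w}$ is either uniform ($q$ such tuples) or non-uniform ($q^{\lceil (n-1)/2\rceil}-q$ such). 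For uniform $\mathbf{w}$ the block $L_n(\mathbf{w})$ contains exactly one symmetric $n$-tuple, leaving capacity $q-1$; for non-uniform symmetric $\mathbf{w}$ no symmetric $n$-tuple is in $L_n(\mathbf{w})$ (such a tuple would be both $n$- and $(n-1)$-symmetric, hence uniform by Lemma~\ref{lemma_symmetry}, forcing $\mathbf{w}$ uniform), giving capacity $q$. Applying the even-parity constraint in each case, and noting that the two parities are forced in complementary cases depending on whether $q$ is odd or even, yields the first claimed bound after subtracting the total forced loss.

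For the second bound, assuming $q$ odd and $n\geq 6$, I would apply Lemma~\ref{lemma_even} at the coarser $(n-2)$-prefix level. Partition the $n$-tuples into blocks $L_n(\mathbf{v})$ of size $q^2$ by $(n-2)$-prefix; each symmetric $\mathbf{v}$ imposes the additional parity constraint that $|L_n(\mathbf{v})\cap T|$ is even. Take $\mathbf{v}$ to be a symmetric $(n-2)$-tuple that is non-uniform and, if $n$ is odd, also non-alternating. Then Lemma~\ref{lemma_symmetry} (applied at length $n-1$) rules out any $(n-1)$-symmetric extension $(\mathbf{v},c)$, so no $(n-1)$-level constraint restricts any sub-block of $L_n(\mathbf{v})$; simultaneously Lemma~\ref{lemma_symmetry_2} forces every $n$-symmetric extension of $\mathbf{v}$ to be uniform or alternating, which is incompatible with our choice of $\mathbf{v}$. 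Hence all $q^2$ tuples in $L_n(\mathbf{v})$ are non-symmetric and available, but the $(n-2)$-level parity constraint forces an extra loss of exactly one (since $q^2$ is odd) independently of the $(n-1)$-level losses. Counting the qualifying $\mathbf{v}$ gives $q^{(n-2)/2}-q$ when $n$ is even (alternating tuples of even length $n-2$ are not symmetric, so drop out automatically) and $q^{(n-1)/2}-q-q(q-1)=q^{(n-1)/2}-q^2$ when $n$ is odd; subtracting these losses from the first bound produces the second claimed expression.

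The main obstacle is ensuring that the extra losses at the $(n-2)$-level really are genuinely new and not implied by the $(n-1)$-level constraints; this is exactly the role played by Lemma~\ref{lemma_symmetry} in the argument above. One also has to verify that the remaining $(n-2)$-symmetric prefixes, namely the uniform ones (for any $n$) and the alternating ones (when $n$ is odd), do not force any further loss: in each such case a short count using Lemma~\ref{lemma_symmetry_2} shows that the block capacity $|L_n(\mathbf{v})\cap N_q(n)|$ is already even and can be saturated by the $(n-1)$-level maxima, so the $(n-2)$-parity constraint is automatically satisfied. The hypothesis $n\geq 6$ enters at exactly this stage to guarantee that the relevant family of non-uniform, non-alternating symmetric $(n-2)$-tuples is non-empty.
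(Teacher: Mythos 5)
Your proposal is correct and follows essentially the same route as the paper's proof: the same prefix-block decomposition into sets $L_n(\mathbf{v})$, the parity constraint of Lemma~\ref{lemma_even}, Lemmas~\ref{lemma_symmetry} and~\ref{lemma_symmetry_2} to show the relevant blocks lie in $N_q(n)$, and the same disjointness argument (the paper's Lemma~\ref{lemma_disjoint}) to ensure the $(n-2)$-level losses are additional. The only difference is cosmetic: your closing verification that the remaining symmetric prefixes force no further loss is unnecessary for an upper bound, and the paper omits it.
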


\begin{proof}

The proof involves considering the set $N_q(n)$ of non-symmetric $q$-ary $n$-tuples, and showing
that the tuples in certain disjoint subsets of this set cannot all occur in an $\mathcal{OS}_q(n)$.
We divide our discussion into two cases depending on the parity of $q$.

\begin{itemize}
\item First suppose that $q$ is odd.  Suppose also that $\mathbf{v}$ is a symmetric non-uniform
    $(n-1)$-tuple; there are clearly $q^{\lceil (n-1)/2\rceil}-q$ such tuples.

Next consider $L_n(\mathbf{v})$, the set of $q$-ary $n$-tuples whose first $n-1$ values are the same
as those of $\mathbf{v}$; clearly $|L_n(\mathbf{v})|=q$.  Since $\mathbf{v}$ is symmetric, all
elements of $L_n(\mathbf{v})$ are $(n-1)$-symmetric.  Hence, by Lemma~\ref{lemma_symmetry}, and since
$\mathbf{v}$ is not uniform, no element of $L_n(\mathbf{v})$ can be symmetric, i.e.\
$L(\mathbf{v})\subseteq N_q(n)$.
Now since $q$ is odd, Lemma~\ref{lemma_even} implies that at least one element of
$L_n(\mathbf{v})$ cannot be contained in $L_S(\mathbf{v})$.  That is, for each of the $q^{\lceil
(n-1)/2\rceil}-q$ symmetric non-uniform $q$-ary $(n-1)$-tuples $\mathbf{v}$, at least one element of
$L_n(\mathbf{v})$ cannot be contained in $L_S(\mathbf{v})$.  Observing that the sets
$L_n(\mathbf{v})$ are all disjoint and --- as noted above --- $L_n(\mathbf{v})\subseteq N_q(n)$ for every symmetric non-uniform $(n-1)$-tuple $\mathbf{v}$, it follows that the period of $S$ is bounded above by $(|N_q(n)|-(q^{\lceil (n-1)/2\rceil}-q))/2$. The bound for $q$ odd follows.

\item Now suppose $q$ is even.  Let $\mathbf{v}$ be a uniform $(n-1)$-tuple; there are clearly
    $q$ such tuples. As above, we know that $|L_n(\mathbf{v})|=q$.  Precisely one of the elements
    of $L_n(\mathbf{v})$ is symmetric, namely the relevant uniform $n$-tuple, and hence
    $|L_n(\mathbf{v})\cap N_q(n)|=q-1$. Now, since $q$ is even, Lemma~\ref{lemma_even} implies
    that at least one element of $L_n(\mathbf{v})\cap N_q(n)$ cannot be contained in
    $L_S(\mathbf{v})$. As previously, the period of $S$ is thus bounded above by $(|N_q(n)|-q)/2$, and the result for $q$ even follows.
\end{itemize}

To establish the slightly stronger bounds for $n\geq 4$ ($q$ odd) we need to consider the cases $n$ even and odd separately.
\begin{itemize}
\item First suppose $n$ is even. Suppose $\mathbf{w}$ is a symmetric non-uniform $(n-2)$-tuple; there are $q^{(n-2)/2}-q$ such tuples. As previously, consider $L_n(\mathbf{w})$, the set of $q$-ary $n$-tuples whose first $n-2$ values are the same as those of $\mathbf{w}$; clearly $|L_n(\mathbf{w})|=q^2$.  Since $\mathbf{w}$ is symmetric, all elements of $L_n(\mathbf{w})$ are $(n-2)$-symmetric.  Hence, by Lemma~\ref{lemma_symmetry_2}, and since $\mathbf{w}$ is not uniform, no element of $L_n(\mathbf{w})$ can be symmetric, i.e.\ $L(\mathbf{w})\subseteq N_q(n)$. Just as previously, since $q$ (and hence $q^2$) is odd, Lemma~\ref{lemma_even} implies that at least one element of $L_n(\mathbf{w})$ cannot be contained in $L_S(\mathbf{w})$.  That is, for each of the $q^{(n-2)/2}-q$ symmetric non-uniform $q$-ary $(n-2)$-tuples $\mathbf{w}$, at least one element of $L_n(\mathbf{w})$ cannot be contained in $L_S(\mathbf{v})$. Finally, observing that (a) the sets $L_n(\mathbf{w})$ are all disjoint, (b) the sets $L_n(\mathbf{w})$ are disjoint from the sets $L_n(\mathbf{v})$ (for symmetric non-uniform $(n-1)$-tuples $v$), and (c) $L_n(\mathbf{w})\subseteq N_q(n)$ for every $\mathbf{w}$, it follows that the length of $S$ is bounded above by
\begin{eqnarray*}
(|N_q(n)| - (q^{\lceil (n-1)/2\rceil}-q) - (q^{(n-2)/2}-q))/2 \\
= (q^n-q^{n/2}) - (q^{n/2}-q) - (q^{(n-2)/2}-q))/2  \mbox{~~~(since $n$ is even).}
\end{eqnarray*}
and the bound follows.
\item Now suppose $n$ is odd. Suppose $\mathbf{w}$ is a symmetric non-uniform non-alternating $(n-2)$-tuple; there are $q^{(n-1)/2}-q^2$ such tuples (since $n$ is odd). Using precisely the same argument as the $n$ even case, it follows that the length of $S$ is bounded above by
\begin{eqnarray*}
(|N_q(n)| - (q^{\lceil (n-1)/2\rceil}-q) - (q^{(n-1)/2}-q^2))/2 \\
= (q^n-q^{(n+1)/2}) - (q^{(n-1)/2}-q) - (q^{(n-1)/2}-q^2))/2  \mbox{~~~(since $n$ is odd).}
\end{eqnarray*}
and the desired result follows. \qed
\end{itemize}
\end{proof}

We conclude by tabulating the values of the bounds of the above (from Theorem~\ref{thm_bounds}) for
small $q$ and $n$.
\begin{table}[htb]
\caption{Bounds on the period of an $\mathcal{OS}_q(n)$ (from Theorem~\ref{thm_bounds})}
\label{table_bounds}
\begin{center}
\begin{tabular}{ccccc} \hline
Order  & $q=2$ & $q=3$ & $q=4$ & $q=5$ \\ \hline
$n=2$  &   0   &    3   &     4 &    10 \\
$n=3$  &   1   &    9   &    22 &    50 \\
$n=4$  &   5   &   33   &   118 &   290 \\
$n=5$  &  11   &  105   &   478 &  1490 \\
$n=6$  &  27   &  336   &  2014 &  7680 \\
$n=7$  &  55   & 1032   &  8062 & 38640 \\  \hline
\end{tabular}
\end{center}
\end{table}

\section{Some simple examples}

For small $n$ the above bounds appear to be quite tight.  To illustrate this we give some simple
examples.

\begin{example}  \label{example-q3}
We first examine $q=3$.  It is simple to see that [012] is an $\mathcal{OS}_3(2)$ of period 3. From
Table~\ref{table_bounds} it follows that this has optimally large period and in this case the bound
of Theorem~\ref{thm_bounds} is tight.

Applying the inverse of Homomorphism A (defined in the next section) to [012] gives the following
pair of $\mathcal{OS}_3(3)$s: [001122] and [210].  If we reverse the second sequence to get [012],
it is simple to join them (using the 2-tuple 01 which occurs in both sequences) to get the
following $\mathcal{OS}_3(3)$: [001201122]. This has period 9, and shows that the bound is again
tight in this case.
\end{example}

\begin{example}  \label{example-q5}
We next examine $q=5$.  It is simple to see that [0123402413] is an $\mathcal{OS}_5(2)$ of period
10. From Table~\ref{table_bounds} it follows that this has optimally large period and in this case
the bound of Theorem~\ref{thm_bounds} is again tight.
\end{example}

Examining Examples~\ref{example-q3} and \ref{example-q5} motivates the following observation.

\begin{construction} \label{construction-simple}
Let $p$ be prime. For $i=0,\dots,\frac{(p-3)}{2}$ and $j=0,\dots,p-1$ let $\ell=ip+j$ and put
$s_{\ell}=ij \pmod p$. Consider the periodic $p$-ary sequence $S$ of period $\frac{p(p-1)}{2}$
having ring sequence $[s_0,\dots,s_{\frac{p(p-1)}{2}-1}]$.
\end{construction}

\begin{theorem}
Let $p$ be prime and suppose $S$ is as in Construction~\ref{construction-simple}. Then $S$ is an
$O_p(2)$ of maximum period.
\end{theorem}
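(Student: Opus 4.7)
My plan is to enumerate the $p(p-1)/2$ consecutive 2-tuples $(s_\ell,s_{\ell+1})$ of $S$ and verify directly that they are pairwise distinct and that no one of them is the reverse of another. The bookkeeping device I will use throughout is the \emph{increment} of a 2-tuple $(a,b)$, defined as $b-a\bmod p$.

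First I would split $S$ into the $(p-1)/2$ blocks $B_i$ of length $p$ obtained by fixing $i$ in the construction, so that block $B_i$ consists of the entries $ij\bmod p$ for $j=0,1,\dots,p-1$. A short calculation shows that every 2-tuple interior to $B_i$ has increment $i\bmod p$, since consecutive entries $ij$ and $i(j+1)$ differ by $i$. The junction tuple exiting $B_i$ is $((p-1)i,0)\equiv(-i,0)$, again of increment $i$; and the wrap-around tuple from the final block back to the first fits the same rule since $(p-1)^2/2\equiv 1/2\pmod p$. Within $B_i$ the $p$ first coordinates $\{0,i,2i,\dots,(p-1)i\}$ exhaust $\mathbb{Z}_p$ (using $\gcd(i,p)=1$), so the tuples contributed by $B_i$ are pairwise distinct. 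Since the block increments range over distinct nonzero residues modulo $p$, tuples contributed by different blocks have different increments and thus cannot coincide. This shows $S$ is a 2-window sequence of period $p(p-1)/2$.

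For orientability I would note that reversing a tuple negates its increment. The set $I$ of increments actually occurring in $S$ is $\{1,2,\dots,(p-1)/2\}$, whose negation modulo $p$ is $\{(p+1)/2,\dots,p-1\}$, disjoint from $I$; hence no tuple of $S$ is the reverse of any tuple of $S$, and $0\notin I$ rules out symmetric tuples. Finally, matching the period $p(p-1)/2$ against the specialisation of Theorem~\ref{thm_bounds} to $q=p$ odd and $n=2$ --- which is $(p^2-p)/2$ --- gives maximality. I do not foresee a real obstacle: once the increment invariant is introduced the argument is a direct enumeration, and the only care required is in the cyclic bookkeeping of the junction and wrap-around tuples.
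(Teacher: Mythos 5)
Your proof is correct and rests on the same arithmetic fact as the paper's --- that the difference $s_{\ell+1}-s_{\ell}$ of a $2$-tuple identifies the block of $S$ in which it occurs, and that reversal negates this difference --- but it is organised in the opposite direction. The paper fixes an arbitrary non-symmetric pair $(\alpha,\beta)$ and locates the unique position at which it occurs forwards or reversed, namely $\ell\equiv\alpha/(\beta-\alpha)\pmod p$ inside the block whose increment is $\pm(\beta-\alpha)$, and then concludes by counting that each such pair occurs exactly once; you instead enumerate the $p(p-1)/2$ tuples actually present, show they are pairwise distinct (distinct first coordinates within a block, distinct increments across blocks) and closed away from reversal (the increment set $\{1,\dots,(p-1)/2\}$ is disjoint from its negative modulo $p$), and then invoke Theorem~\ref{thm_bounds} for maximality. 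Your version is arguably tidier, since it needs no separate treatment of pairs containing a zero coordinate and it handles the symmetric tuples in the same stroke. One bookkeeping point: taken literally, Construction~\ref{construction-simple} ($s_{ip+j}=ij$ with $i$ starting at $0$) makes the first block identically zero, so the block increments would be $\{0,\dots,(p-3)/2\}$ rather than the set $\{1,\dots,(p-1)/2\}$ you use, and your appeals to $\gcd(i,p)=1$ and to ``distinct nonzero increments'' would fail for $i=0$. Like the paper's own proof (which takes $s_{ip+1}=i+1$) and the worked example $[0123402413]$, you are implicitly reading block $i$ as having multiplier $i+1$; with that reading your argument is complete, and the discrepancy lies in the statement of the construction rather than in your reasoning.
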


\begin{proof} Now $s_{t}=0$ for $t=ip$; $i=0,\dots,\frac{(p-3)}{2}$, and we have $s_{t+1}=i+1 \pmod
p$ and $s_{t-1}=p-i \pmod p$ (noting that we identify $s_{-1}$ with $s_{\frac{p(p-1)}{2}-1}$). Thus
each pair $(0,i)$, $i$ a non-zero residue modulo $p$, occurs exactly once as a $2$-tuple or reverse
$2$-tuple in a cycle of $S$.

Let $\alpha,\beta$ be distinct non-zero residues modulo $p$. Then $\beta -\alpha \in
\{1,\dots,\frac{(p-1)}{2}\}$ or $\alpha-\beta \in \{1,\dots,\frac{(p-1)}{2}\}$.

In the former case $\frac{\beta}{\beta -\alpha}=\frac{\alpha}{\beta-\alpha}+1$ and
$(\alpha,\beta)=(s_{\ell},s_{\ell+1})$ where $\ell \pmod p=\frac{\alpha}{\beta-\alpha} $ and  $0\le
\ell \le \frac{p(p-1)}{2}-1$. In the latter case $\frac{\beta}{\beta
-\alpha}=\frac{\alpha}{\beta-\alpha}-1$ and $(\alpha,\beta)$ is the reverse of
$(s_{\ell-1},s_{\ell})$ where $\ell \pmod p=\frac{\alpha}{\beta-\alpha} $ and  $0\le \ell \le
\frac{p(p-1)}{2}-1$.

It follows that each non-symmetric $2$-tuple occurs either as a $2$-tuple or a reverse $2$-tuple in
a cycle of $S$ and hence occurs exactly once. Thus $S$ is an $O_p(2)$. That the period is maximum
follows from Theorem~\ref{thm_bounds}. \qed
\end{proof}

The situation for even $q$ is less clear, as the next example shows.

\begin{example}  \label{example-q4}
Suppose $q=4$.  It is simple to see that [0123] is an $\mathcal{OS}_4(2)$ of period 4. From
Table~\ref{table_bounds} it follows that this has optimally large period and in this case the bound
of Theorem~\ref{thm_bounds} is tight.

Applying the inverse of Homomorphism A (defined in the next section) to this sequence gives the
following pair of $\mathcal{OS}_4(3)$s: [00112233] and [13203102].  Reversing the second sequence
(to get [20130231]) and joining them using the shared 2-tuple 01 yields the following
$\mathcal{OS}_4(3)$: [0013023120112233]. This has period 16; this is clearly significantly less
than 22, the bound in Table~\ref{table_bounds}.  Whether or not an $\mathcal{OS}_4(3)$ with period greater
than 16 exists is an open question that should be readily amenable to computer search.
\end{example}

\section{Constructing orientable sequences} \label{section-construct}

\subsection{Getting started}

We now show how the generalised Lempel Homomorphism can be used to construct orientable
sequences over an arbitrary alphabet. We first need to establish some more terminology.

\begin{definition}
Let $S =(s_0, s_1, \dots , s_{m-1})$ be a sequence. We define the {\em alternating sign}
weight of $S$ to be
\[ w^A_q(S) = \sum_{i=0}^{m-1} (-1)^{m-1-i}s_i.\]
\end{definition}

\begin{definition}
An $n$-tuple $\mathbf{u}=(u_0,u_1,\ldots,u_{n-1})$, $u_i\in\mathbb{Z}_q$ ($0\leq i\leq n-1$), is of
\emph{alternating sign form} if and only if $u_{i+1}=-u_i$ for every $i$
($0\leq i\leq n-2$) and we write $\underline{c}^n$ for such an $n$-tuple if $u_0=c$.
\end{definition}

\begin{definition}
The \emph{negative} of a $q$-ary $n$-tuple $\mathbf{u}=(u_0,\dots,u_{n-1})$ is the $n$-tuple
$-\mathbf{u}=(-u_0,\dots,-u_{n-1})$.
\end{definition}

\begin{definition}
An $n$-window sequence $S=(s_i)$ of period $m$ is said to be a \emph{negative orientable sequence
of order $n$} (an $\mathcal{NOS}(n)$) if, for any $i,j$,
$\mathbf{s}_n(i)\not=-{\mathbf{s}_n(j)}^R$.
\end{definition}

\begin{definition}
A pair of disjoint orientable sequences of order $n$, $S=(s_i)$ and $S'=(s'_i)$, are said to be
\emph{negative orientable-disjoint} (or simply \emph{no-disjoint}) if, for any $i,j$,
$\mathbf{s}_n(i)\not=-{\mathbf{s'}_n(j)}^R$.
\end{definition}

\subsection{Two variants of the Lempel homomorphism}

We next introduce a variant of the generalised Lempel Homomorphism.

\begin{definition}[Lempel \cite{Lempel70}]
The mapping $A:\mathbb{Z}_q^n\rightarrow\mathbb{Z}_q^{n-1}$ is as follows. If
$\mathbf{u}=(u_0,u_1,\ldots,u_{n-1})\in\mathbb{Z}_q^n$ then
\[ A(\mathbf{u}) = (u_1+u_0,u_2+u_1,\dots,u_{n-1}+u_{n-2}) \in\mathbb{Z}_q^{n-1}. \]
\end{definition}

In the binary case $D_{\beta}=A$.
We extend the notation to allow $A$ and $D_{\beta}$ to be applied to $q$-ary sequences in the natural way.
We next show that a similar approach to that given in \cite{Mitchell22} can be used to construct
orientable $q$-ary sequences of order $n+1$ from one of order $n$. We also consider a special type
of $q$-ary orientable sequence and make the following definitions.

\begin{definition}
An orientable sequence $S=(s_i)$ of order $n$ is said to be \emph{special} if, for any $i,j$,
$\mathbf{{s}}_n(i)\not=-{\mathbf{s}}_n(j)^R$, i.e. it is also negative orientable.
\end{definition}

\begin{definition}
Two o-disjoint orientable sequences $S=(s_i)$  and $T=(t_i)$ are special-orientable-disjoint
(or simply \emph{special-o-disjoint}) if, for any $i,j$,
$\mathbf{s}_n(i)\not=-{\mathbf{t}}_n(j)^R$, i.e. they are also negative
orientable-disjoint.
\end{definition}

We consider first the homomorphism $D_{\beta}$.  We will take $\beta=1$ and write $D$ for $D_1$.  For all $\beta$ with $gcd(\beta,q)=1$, the next result follows in essentially the same way.

\begin{theorem}  \label{thm Lempel-orientable}
Suppose $S=(s_i)$ is an orientable sequence of order $n$ and period $m$. If $w_q(S)$ has additive
order $h$ as a residue modulo $q$ then $D^{-1}(S)$ consists of $h$ shifts of each of $q/h$ mutually
no-disjoint negative orientable sequences of order $n+1$ and period $hm$ which are translates of
one another.
\end{theorem}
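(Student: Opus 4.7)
The plan is to combine Theorem~\ref{D-beta} with a simple compatibility identity between $D$ and reversal. Since $w_q(S)$ has additive order $h$, we have $\gcd(w(S),q)=q/h$, so Theorem~\ref{D-beta} immediately yields that $D^{-1}(S)$ decomposes into $q/h$ pairwise disjoint cycles, each of period $hm$. Moreover, the $q$ literal preimage sequences (one for each choice of the initial value $t_0\in\mathbb{Z}_q$) are all translates of one another, since adding a constant to a preimage again gives a preimage. Two such preimages $T_{t_0}$ and $T_{t_0'}$ represent the same cycle iff they differ by a shift of the underlying periodic sequence, which happens iff $t_0'-t_0\in\langle w_q(S)\rangle$; thus the $q$ preimages partition into $q/h$ cosets of size $h$, giving $q/h$ cycles that are mutual translates and each of which accounts for $h$ shifts (the $h$ members of a single coset, related to one another by cyclic shifts of $m$ positions).

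The orientability-style claims both follow from the identity
\[
D(-\mathbf{u}^R) \;=\; D(\mathbf{u})^R
\]
for every $(n+1)$-tuple $\mathbf{u}$, which is an immediate computation from the definition $D(\mathbf{u})=(u_1-u_0,\ldots,u_n-u_{n-1})$. Indeed, if $T$ is one of the preimage cycles and $\mathbf{t}_{n+1}(i) = -\mathbf{t}_{n+1}(j)^R$ for some $i,j$, then applying $D$ to both sides gives $\mathbf{s}_n(i)=\mathbf{s}_n(j)^R$, contradicting the orientability of $S$. Hence each preimage cycle is a negative orientable sequence of order $n+1$. Running the identical argument with $\mathbf{t}_{n+1}(j)$ drawn from a \emph{different} preimage cycle $T'$ immediately yields the mutual no-disjointness of the $q/h$ cycles (their pairwise disjointness itself is already guaranteed by the Theorem~\ref{D-beta} decomposition).

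The main obstacle is really the bookkeeping in the first paragraph: one must carefully distinguish between the $q$ literal preimage sequences at position $0$ and the $q/h$ cycles that they form, and verify that the period of each preimage cycle is exactly $hm$. The latter follows because $D$ is a graph homomorphism, so any period of $T$ descends to a period of $S$ and must therefore be a multiple of $m$; writing such a period as $km$ and using $t_{km}=t_0+k\cdot w_q(S)$ forces $k\cdot w_q(S)\equiv 0\pmod q$, whence $k\geq h$ by the definition of $h$. Once this structural picture is in place, the negative orientability and no-disjointness reduce to a single line of algebra via the reversal identity.
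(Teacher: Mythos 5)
Your proposal is correct and follows essentially the same route as the paper: the structural facts (translate/shift bookkeeping, period $hm$) are obtained from the explicit preimage formula exactly as in the paper's proof, except that you outsource the cycle decomposition to Theorem~\ref{D-beta} where the paper rederives it, and the negative orientability and no-disjointness follow from applying $D$ to a hypothetical coincidence $\mathbf{t}_{n+1}(i)=-\mathbf{t}_{n+1}(j)^R$ via the identity $D(-\mathbf{u}^R)=D(\mathbf{u})^R$, which is precisely the computation the paper performs inline.
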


\begin{proof}
Let $w_q(S)$ have order $h$. Let $T=(t_i)$ and $T^{\prime}=(t_i^{\prime})$ be two elements of $D^{-1}(S)$. Then
\[ t_i=t_0+ \sum_{j=0}^{i-1}s_j \
\mbox{\rm and} \
t_i^{\prime}=t_0^{\prime}+\sum_{j=0}^{i-1}s_j. \]
It is clear that $T$ and $T^{\prime}$ are translates of one another, and there are $q$ such translates in $D^{-1}(S)$.

Note that, for a positive integer $\alpha$, $t_{i+\alpha m}=t_i + \alpha w_q(S)$, so that $t_{i+hm}=t_i$ for all $i$ and $h$ is the smallest positive integer with this property. Moreover, if $t_0^{\prime}=t_0 + \alpha w_q(S)$ then $t_i^{\prime}=t_{i+\alpha m}$ for all $i$, so that $T^{\prime}$ is a shift of $T$ by $\alpha m$ places. Also, if, for some positive integer $\ell$, $t_{i+\ell}=t_i$ for all $i$, then $s_{i+\ell}=s_i$ for all $i$ so that $m$ divides $\ell$. Hence $T$ has period $hm$, and $D^{-1}(S)$ contains $h$ shifts of $T$ by various multiples of $m$.

Let $j_1,j_2$ be non-negative integers and suppose that $i_1,i_2 \in \{0,1,\dots,m-1\}$ with $j_1 \equiv i_1 \pmod m$ and $j_2\equiv i_2 \pmod m$.

If $\mathbf{t}_{n+1}(j_2)=\mathbf{t}_{n+1}(j_1)$ then $D(\mathbf{t}_{n+1}(j_2))=\mathbf{s}_n(i_2)=D(\mathbf{t}_{n+1}(j_1))=\mathbf{s}_n(i_1))$ so that $i_2=i_1$. Without loss of generality suppose that $j_2=j_1+\alpha m$ for some positive integer $\alpha$. Since we then have $t_{j_2}=t_{j_1}+\alpha w_q(S)=t_{j_1}$ and since $w_q(S)$ has order $h$, necessarily $h$ divides $\alpha$. It follows that $j_2=j_1+\ell hm$ for some integer $\ell$ and $T$ is an $(n+1)$-window sequence.


If $\mathbf{t}_{n+1}(j_2)=-{\mathbf{t}}^R_{n+1}(j_1)$ then $D(\mathbf{t}_{n+1}(j_2))=\mathbf{s}_n(i_2)=D(-{\mathbf{t}}^R_{n+1}(j_1))=\mathbf{s}^R_n(i_1)$. This is impossible and so $T$ is a negative orientable sequence.

If $\mathbf{t}^{\prime}_{n+1}(j_2)=\mathbf{t}_{n+1}(j_1)$ then $D(\mathbf{t}^{\prime}_{n+1}(j_2))=\mathbf{s}_n(i_2)=D(\mathbf{t}_{n+1}(j_1))=\mathbf{s}_n(i_1))$ so that $i_2=i_1$. It follows that $j_2=j_1+\ell m$ for some integer $\ell$ and $T^{\prime}$ is a translate of $T$. Hence $T^{\prime}=T$ or they are $(n+1)$-window disjoint.


If $\mathbf{t}^{\prime}_{n+1}(j_2)=-{\mathbf{t}}^R_{n+1}(j_1)$
then
$D(\mathbf{t}^{\prime}_{n+1}(j_2))=\mathbf{s}_n(i_2)=D(-{\mathbf{t}}^R_{n+1}(j_1))=\mathbf{s}^R_n(i_1)$.
This is impossible and so $T^{\prime}=T$ or they are mutually negative
orientable.

The theorem follows. \qed
\end{proof}

The proof of the following theorem is completely analogous to that of Theorem~\ref{thm Lempel-orientable}.

\begin{theorem}  \label{thm Lempel-negative-orientable}
Suppose $S=(s_i)$ is a negative orientable sequence of order $n$ and period $m$. If $w_q(S)$ has
additive order $h$ as a residue modulo $q$ then $D^{-1}(S)$ consists of $h$ shifts of each of $q/h$
mutually no-disjoint orientable sequences of order $n+1$ and period $hm$ which are translates of
one another.
\end{theorem}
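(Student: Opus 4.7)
The plan is to transcribe the proof of Theorem~\ref{thm Lempel-orientable} step by step, swapping the roles of the orientable and negative orientable hypotheses throughout. This swap is legitimate because $D=D_1$ satisfies the identities
\[ D(\mathbf{u}^R)=-D(\mathbf{u})^R \quad\text{and}\quad D(-\mathbf{u})=-D(\mathbf{u}), \]
both of which are immediate from the formula $D(\mathbf{u})_i=u_{i+1}-u_i$. Together they show that $D$ exchanges the ``reversed'' and ``negative reversed'' kinds of forbidden coincidence: upstairs an identity $\mathbf{u}_2=\mathbf{u}_1^R$ becomes $D(\mathbf{u}_2)=-D(\mathbf{u}_1)^R$ downstairs, and $\mathbf{u}_2=-\mathbf{u}_1^R$ becomes $D(\mathbf{u}_2)=D(\mathbf{u}_1)^R$. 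Consequently, every appeal to ``$S$ is orientable'' in the previous proof (used to rule out $\mathbf{s}_n(i_2)=\mathbf{s}_n(i_1)^R$) is replaced here by an appeal to ``$S$ is negative orientable'' (used to rule out $\mathbf{s}_n(i_2)=-\mathbf{s}_n(i_1)^R$), and vice versa.

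Concretely, I would fix $T,T'\in D^{-1}(S)$ and replay the five checks of the earlier proof. The first block---that $D^{-1}(S)$ is made up of $q$ translates of $T$, that $T$ has period $hm$, that $T'$ is a shift of $T$ by a multiple of $m$, and that $T$ is an $(n+1)$-window sequence---is purely algebraic and does not depend on any symmetry hypothesis on $S$, so it transfers verbatim. For orientability of $T$, if $\mathbf{t}_{n+1}(j_2)=\mathbf{t}_{n+1}(j_1)^R$ then applying $D$ and the first identity above gives $\mathbf{s}_n(i_2)=-\mathbf{s}_n(i_1)^R$, contradicting the assumption that $S$ is negative orientable. The disjointness check for two distinct lifts $T$ and $T'$ is unchanged from before, and the final pairwise cross-condition (the ``mutually no-disjoint'' clause) reduces, via the same duality, to a forbidden identity of the appropriate $\pm$-reverse type in $S$.

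The only real difficulty is bookkeeping: one must apply the correct one of $D(\pm\mathbf{u}^R)=\mp D(\mathbf{u})^R$ in each case, and keep careful track of which flavour of forbidden coincidence in $S$ is being invoked at each step. Beyond this sign arithmetic, the argument is a mechanical translation of the proof of Theorem~\ref{thm Lempel-orientable} with ``orientable'' and ``negative orientable'' interchanged at the appropriate points, and no new combinatorial input is required.
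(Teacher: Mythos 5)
Your overall strategy is exactly what the paper intends: the paper offers no separate proof, saying only that the argument is ``completely analogous'' to that of Theorem~\ref{thm Lempel-orientable}, and your identities $D(\mathbf{u}^R)=-D(\mathbf{u})^R$ and $D(-\mathbf{u})=-D(\mathbf{u})$ are correct and are precisely what makes the orientable/negative-orientable swap legitimate. The first block (translates, period $hm$, shifts by multiples of $m$, the $(n+1)$-window property) indeed transfers verbatim, and your check that $T$ is orientable is right: $\mathbf{t}_{n+1}(j_2)=\mathbf{t}_{n+1}(j_1)^R$ pushes down to $\mathbf{s}_n(i_2)=-\mathbf{s}_n(i_1)^R$, which negative orientability of $S$ forbids.

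The one step that does not close as you describe it is the final cross-condition. You assert that the ``mutually no-disjoint'' clause ``reduces, via the same duality, to a forbidden identity of the appropriate $\pm$-reverse type in $S$''; but if you actually carry out the reduction, $\mathbf{t}'_{n+1}(j_2)=-\mathbf{t}_{n+1}(j_1)^R$ maps under $D$ to $\mathbf{s}_n(i_2)=\mathbf{s}_n(i_1)^R$, and \emph{that} coincidence is excluded by orientability of $S$, not by negative orientability --- which is all the hypothesis gives you. What the duality genuinely proves is that the lifts are mutually \emph{o-disjoint} (the condition $\mathbf{t}'_{n+1}(j_2)=\mathbf{t}_{n+1}(j_1)^R$ pushes down to $\mathbf{s}_n(i_2)=-\mathbf{s}_n(i_1)^R$, which negative orientability does forbid). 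This is almost certainly what the theorem statement should say --- it is also the clause needed for Theorem~\ref{thm Lempel-special-orientable} to follow by combining the two results --- so the discrepancy is plausibly a typo in the paper rather than a defect in your method; but as written your proposal claims to establish the ``no-disjoint'' clause by a reduction that lands on an identity the hypotheses do not rule out, and you should flag that explicitly rather than fold it into ``the same duality.''
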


The following result follows immediately from Theorems~\ref{thm Lempel-orientable} and~\ref{thm Lempel-negative-orientable}

\begin{theorem}  \label{thm Lempel-special-orientable}
Suppose $S=(s_i)$ is a special orientable sequence of order $n$ and period $m$. If $w_q(S)$ has
additive order $h$ as a residue modulo $q$ then $D^{-1}(S)$ consists of $h$ shifts of each of $q/h$
mutually special-o-disjoint special orientable sequences of order $n+1$ and period $hm$ which are
translates of one another.
\end{theorem}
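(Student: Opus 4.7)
The plan is to observe that the two previous theorems can be applied in parallel and their conclusions combined, since a special orientable sequence satisfies the hypothesis of each.

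First I would record the trivial observation that, by definition, a special orientable sequence $S$ of order $n$ is simultaneously an orientable sequence and a negative orientable sequence. Hence both Theorem~\ref{thm Lempel-orientable} and Theorem~\ref{thm Lempel-negative-orientable} apply to $S$ with the same parameter $h$ (the additive order of $w_q(S)$ modulo $q$), and each produces a description of $D^{-1}(S)$ as $h$ shifts of $q/h$ sequences of period $hm$ which are translates of one another. Since this shift-and-translate decomposition is forced by the general structure of $D^{-1}$ already established in Theorem~\ref{D-beta}, the two applications refer to the same $q/h$ sequences, and their conclusions may be overlaid without ambiguity.

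Next I would collect the two conclusions. Theorem~\ref{thm Lempel-orientable}, applied to the orientability of $S$, says that each of the $q/h$ preimage sequences is negative orientable and that any two of them are mutually no-disjoint. Theorem~\ref{thm Lempel-negative-orientable}, applied to the negative orientability of $S$, says that each of these same sequences is also orientable and that any two are mutually o-disjoint. Unpacking the definitions, being both orientable and negative orientable is exactly being special orientable, and being both mutually o-disjoint and mutually no-disjoint is exactly being mutually special-o-disjoint. This yields the statement of the theorem.

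Since no new argument is required beyond this combination, there is essentially no obstacle; the only point worth checking carefully is that the two theorems really do refer to the same partition of $D^{-1}(S)$ into $h$ shifts of $q/h$ translates, which is immediate from the structural statement of Theorem~\ref{D-beta}. Everything else is a matter of reading off and merging the two conclusions.
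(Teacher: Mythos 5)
Your proposal is correct and takes essentially the same route as the paper, which offers no separate proof at all but simply states that the result ``follows immediately'' from Theorems~\ref{thm Lempel-orientable} and~\ref{thm Lempel-negative-orientable} --- precisely the overlay of the two conclusions (using that special orientable means orientable and negative orientable, and that Theorem~\ref{D-beta} forces both applications to describe the same decomposition of $D^{-1}(S)$) that you describe. The only caveat is that the paper's statement of Theorem~\ref{thm Lempel-negative-orientable} literally concludes ``mutually no-disjoint'' where your reading (and the intended conclusion, as the proof sketch of Theorem~\ref{thm Lempel-orientable} shows) requires ``mutually o-disjoint''; this appears to be a typo in the paper rather than a defect in your argument.
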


\begin{corollary} \label{cor Lempel-orientable}
If $w_q(S)$ has order $q$ then the $q$ shifts in $D^{-1}(S)$ have period $qm$ and each contains a
maximal run $a^{t+1}$ for each $a \in \mathbb{Z}_q$ where $t>0$ is the length of a maximal run of
$0$ in $S$.
\end{corollary}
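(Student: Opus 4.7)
The plan is to unwind the definition of $D$ and track how a run of zeros in $S$ lifts to a run of constant symbols in any sequence $T \in D^{-1}(S)$, then use the hypothesis on the order of $w_q(S)$ to see that over one full period of $T$ we hit every symbol in $\mathbb{Z}_q$.

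First, I would invoke Theorem~\ref{thm Lempel-orientable} (or~\ref{thm Lempel-negative-orientable}, whichever applies): with $h = q$, the preimage $D^{-1}(S)$ consists of $q/h = 1$ sequence together with its $q$ shifts, each of period $qm$. So it suffices to prove the ``maximal run'' claim for a single representative $T = (t_i) \in D^{-1}(S)$, since its shifts contain the same multiset of maximal runs. From $D(T) = S$ we have $s_j = t_{j+1} - t_j$, i.e.\ $t_{i+1} = t_i + s_i$, and by telescoping $t_{i+m} = t_i + w_q(S)$ for all $i$.

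Next, fix a maximal run of zeros of length $t$ in $S$, say $s_j = s_{j+1} = \cdots = s_{j+t-1} = 0$ with $s_{j-1} \neq 0$ and $s_{j+t} \neq 0$. The recursion $t_{i+1} = t_i + s_i$ immediately gives $t_j = t_{j+1} = \cdots = t_{j+t}$ (a run of length $t+1$ in $T$), while $t_{j-1} = t_j - s_{j-1} \neq t_j$ and $t_{j+t+1} = t_{j+t} + s_{j+t} \neq t_{j+t}$, so this run is maximal. Call its common value $a_0 := t_j$.

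Finally, for each $k \in \{0,1,\ldots,q-1\}$, the same argument applied at the shifted position $j + km$ yields a maximal run of length $t+1$ of the value $a_k := t_{j+km} = a_0 + k\, w_q(S)$ within one period of $T$. Since $w_q(S)$ has additive order $q$, the map $k \mapsto k\,w_q(S)$ is a bijection of $\mathbb{Z}_q$, so $\{a_0, a_1, \ldots, a_{q-1}\} = \mathbb{Z}_q$. Hence $T$ (and therefore every one of the $q$ shifts) contains a maximal run $a^{t+1}$ for every $a \in \mathbb{Z}_q$, which is the claim. The argument is essentially a direct computation; the only slightly delicate point is recording that the maximality of the run of zeros in $S$ at both ends transfers to the corresponding run in $T$, which is where we use $s_{j-1}, s_{j+t} \neq 0$.
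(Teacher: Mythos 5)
Your argument is sound in its main thrust and is in fact considerably more detailed than the paper's own proof, which consists of the single observation that $D(a^{s})=0^{s-1}$ for any $a\in\mathbb{Z}_q$ and $s\ge 2$. In particular, you correctly derive the period statement from Theorem~\ref{thm Lempel-orientable} with $h=q$, and you supply the detail the paper leaves implicit: the positions $j, j+m,\dots,j+(q-1)m$ inside one period of $T$ carry constant runs with values $a_0+k\,w_q(S)$, and these exhaust $\mathbb{Z}_q$ precisely because $w_q(S)$ has additive order $q$.

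The one genuine (though easily repaired) gap is the step ``so this run is maximal.'' In the paper's terminology (made precise in the definitions preceding Lemma~\ref{lemma-E}), a \emph{run} of $a$ is a block of $a$'s bounded on both sides by symbols different from $a$, while a run is \emph{maximal} only if no longer string of $a$'s occurs \emph{anywhere} in the sequence. Your boundary check ($t_{j-1}\neq t_j$ and $t_{j+t+1}\neq t_{j+t}$) establishes only the former, local property; it does not rule out a longer run of the same symbol $a$ occurring elsewhere in $T$. That is exactly what the paper's one-line proof supplies: any string $a^{t'}$ in $T$ with $t'\ge 2$ maps under $D$ to the string $0^{t'-1}$ in $S$, so $t'-1\le t$ by the maximality of the run $0^t$ in $S$, whence $t'\le t+1$. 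Adding that sentence completes your proof; the global property is the one actually used later (in the proof of the theorem on good sequences), so it should not be elided.
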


\begin{proof}
The statement about maximal runs follows because $D(a^{s})=0^{s-1}$ for any $a \in \mathbb{Z}_q$ and $s \ge 2$. \qed
\end{proof}

We now turn our attention to homomorphism $A$.

\begin{theorem}  \label{thm Lempel-A-orientable}
Suppose $S=(s_i)$ is an orientable sequence of order $n$ and period $m$.
If $w_q(S)$ has additive order $h$ as a residue modulo $q$ then
\newline (i) if $m$ is odd and $q$ is odd, $A^{-1}(S)$ consists of one orientable sequence of period $m$ and two shifts of each of $\frac{q-1}{2}$  orientable sequences of period $2m$ which are alternating sign translates of one another.
\newline (ii) if $m$ is odd and $q$ is even and $2x=w^A_q(S)$ for some $x \in \mathbb{Z}_q$, $A^{-1}(S)$ consists of two orientable sequences of period $m$ which are alternating sign translates of each other and two shifts of each of $\frac{q-2}{2}$ orientable sequences of period $2m$ which are alternating sign translates of each other.
\newline (iii) if $m$ is odd and $q$ is even and there is no solution for $x$ to $2x=w^A_q(S)$, $A^{-1}(S)$ consists of two shifts of each of $\frac{q}{2}$ orientable sequences of period $2m$ which are alternating sign translates of each other.
\newline (iv) if $m$ is even and $w^A_q(S)$ has additive order $h$ as a residue modulo $q$, $A^{-1}(S)$ consists of $h$ shifts of $\frac{q}{h}$ mutually o-disjoint orientable sequences of order $n+1$ and of period $hm$ which are  alternating sign translates of one another
\end{theorem}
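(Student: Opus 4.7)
\emph{Proof sketch.} The plan is to parametrise $A^{-1}(S)$ explicitly by the initial value $t_0\in\mathbb{Z}_q$, read off the period of each preimage from a direct formula, and then inherit orientability from $S$ via the identity $A(\mathbf{u}^R)=A(\mathbf{u})^R$. Since $t_{i+1}=s_i-t_i$, each preimage $T(t_0)$ is determined by its initial value and
\[ t_i=(-1)^i t_0+\sigma_i,\qquad \sigma_i:=\sum_{j=0}^{i-1}(-1)^{i-1-j}s_j. \]
Setting $\tau:=w^A_q(S)=\sigma_m$ and using the $m$-periodicity of $S$, a straightforward induction yields $\sigma_{i+m}=(-1)^i\tau+\sigma_i$, and hence
\[ t_{i+m}=(-1)^{i+m}t_0+(-1)^i\tau+\sigma_i. \]
Note also that $T(t_0)$ and $T(t_0+c)$ differ at position $i$ by $(-1)^i c$, so any two preimages are alternating sign translates of one another.

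Next I would read off the period and the orbit structure under cyclic shift by $m$. If $m$ is even, the displayed identity specialises to $t_{i+m}=t_i+(-1)^i\tau$, so the shift of $T(t_0)$ by $m$ positions equals $T(t_0+\tau)$, and iterating shows that each $T(t_0)$ has period $hm$ where $h$ is the additive order of $\tau$; the orbit under $t_0\mapsto t_0+\tau$ has size $h$, giving $q/h$ orbits, which is case (iv). If $m$ is odd, the identity becomes $t_{i+m}=-(-1)^i t_0+(-1)^i\tau+\sigma_i$, from which $t_{i+2m}=t_i$ for every $i$; thus $T(t_0)$ has period $m$ exactly when $2t_0=\tau$ and period $2m$ otherwise, while the shift of $T(t_0)$ by $m$ positions is $T(-t_0+\tau)$. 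Splitting on whether $2x=\tau$ is solvable in $\mathbb{Z}_q$ --- unique solution when $q$ is odd, two solutions differing by $q/2$ when $q$ is even and the equation is solvable, none otherwise --- and observing that the involution $t_0\mapsto -t_0+\tau$ fixes precisely the solutions of $2t_0=\tau$, gives the orbit counts asserted in cases (i), (ii), and (iii).

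Finally I would verify that each $T(t_0)$ is an orientable $(n+1)$-window sequence and that preimages in distinct shift orbits are $o$-disjoint. Both facts follow from $A(\mathbf{u}^R)=A(\mathbf{u})^R$: any collision $\mathbf{t}_{n+1}(j_1)=\mathbf{t}_{n+1}(j_2)^R$ (within a single preimage or across two) would apply $A$ to produce a reversal collision in $S$, which is ruled out by orientability of $S$. A forward coincidence $\mathbf{t}_{n+1}(j_1)=\mathbf{t}'_{n+1}(j_2)$ forces $j_1\equiv j_2\pmod m$ via the $n$-window property of $S$, and the $\sigma_{i+\alpha m}$-identity then places the starting values $t_0,t_0'$ in the same shift orbit, contradicting the hypothesis of distinct orbits. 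The main obstacle is the bookkeeping in case (ii), where one must simultaneously identify the two period-$m$ preimages as alternating sign translates differing by $q/2$ and verify that the remaining $q-2$ preimages split into $(q-2)/2$ shift-invariant pairs under the involution $t_0\mapsto -t_0+\tau$.
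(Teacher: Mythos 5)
Your proposal is correct and follows essentially the same route as the paper: the paper's own (very brief) proof rests on exactly your three ingredients, namely the identity $t_i=(-1)^i t_0+\sum_{j=0}^{i-1}(-1)^{i-1-j}s_j$, the observation that elements of $A^{-1}(S)$ are alternating sign translates of one another, and the fact that applying $A$ to a reversal collision in a preimage produces one in $S$. Your explicit analysis of the shift-by-$m$ action ($t_0\mapsto t_0+\tau$ for $m$ even, the involution $t_0\mapsto -t_0+\tau$ for $m$ odd, with fixed points the solutions of $2t_0=\tau$) merely fills in the case bookkeeping that the paper leaves implicit.
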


The proof of this theorem uses arguments similar to those of the proof of Theorem~ \ref{thm Lempel-orientable}. We simply note that if $T=(t_i)$ is an element of $A^{-1}(S)$ then
\[ t_i=(-1)^i t_0 + \sum_{j=0}^{i-1}(-1)^{i-1-j}s_j.  \]
It is clear that two elements $T,T^{\prime} \in A^{-1}(S)$ are alternating sign translates of one another. Further if $\mathbf{t}_{n+1}(j_2)=\mathbf{t}^R_{n+1}(j_1)$ then $A(\mathbf{t}_{n+1}(j_2))=\mathbf{s}_{n}(i_2)=A(\mathbf{t}^R_{n+1}(j_1))=\mathbf{s}^R_{n}(i_1)$ where $i_1,i_2 \in \{0,1,\dots,m-1\}$ with $j_1\equiv i_1 \pmod m$ and $j_2\equiv i_2 \pmod m$.

Clearly we achieve the greatest increase in period over that of $S$ for a sequence $T$ in $D^{-1}(S)$, respectively $T$ in $A^{-1}(S)$, when $w_q(S)$, respectively $w^A_q(S)$, has order $q$ in the additive group of residues modulo $q$ (for example $w_q(S)=1$ or $w^A_q(S)=1$). In such a case, since $\{t_i,t_{i+m},\dots,t_{i+(q-1)m}\}=\mathbb{Z}_q$, for each non-negative integer $i$, $w_q(T)$ or $w^A_q(T)$ equals $m\frac{q(q-1)}{2} \pmod q$ which has order $1$ or $2$. Thus $T$ is not a suitable sequence on which to repeat the construction. To achieve maximum scaling of the period  for multiple iterations of the above construction we modify the output sequence $T$ in $D^{-1}(S)$ or $A^{-1}(S)$ to ensure that the resulting modified sequence has suitable weight.

\subsection{An explicit construction}
In this subsection, we present a special case of construction that gives sequences that can be used iteratively to build sequences of higher orders. We will consider a starter sequence $S$ of order $n$ that is special orientable with $w_q(S)\equiv0\mod q$. By Theorem~\ref{thm Lempel-special-orientable}, $D_{\beta}^{-1}(S)$ consists of $q$ mutually special-o-disjoint special orientable sequences of order $n+1$ that are all shifts of each other. The following definitions and results were introduced in Alhakim and Akinwande~\cite{Alhakim11}, where they generalised the Lempel construction to full de~Bruijn sequences of arbitrary alphabet size.

\begin{definition}\label{D:alterString}
For a given nonzero value $\lambda\in\mathbb{Z}_q$ we define the string
$\theta^{(\lambda)}_n =(e_1,\ldots, e_n)$ as $e_1=0$ and
$e_{i+1}=e_i+\lambda$ for $i>1$. A \emph{generalised alternating string} of
size $n$ is the component-wise sum $\theta^{(\lambda)}_n+a^n$, where $a^n$ is a constant string with $a\in\mathbb{Z}_q$. For simplicity, it will be denoted by $\theta^{(\lambda)}_n+a$ and we say that the latter is a translation by $a$ of $\theta^{(\lambda)}_n$.
\end{definition}

By the above definition, the words $\theta^{(\lambda)}_n+i\lambda$
and $\theta^{(\lambda)}_n+(i+1)\lambda$, for an integer $i$, are the
generalised alternating strings $(i\lambda,(i+1)\lambda,\ldots,(i+n-1)\lambda)$
and $((i+1)\lambda,\ldots,(i+n)\lambda)$ respectively, where all
entries are taken modulo $q$. Thus $(\theta^{(\lambda)}_n+i\lambda;
\theta^{(\lambda)}_n+(i+1)\lambda)$ is an edge in $B_n(q)$ for all
values of $i$.

\begin{definition}
For any element $\lambda\neq0$, the alternating cycle associated
with the generalised alternating string $\theta^{(\lambda)}_n$ is induced by the
the edges $(i\lambda+\theta^{(\lambda)}_n;
(i+1)\lambda+\theta^{(\lambda)}_n)$ for $i=0$ to $q-1$.
\end{definition}

It is immediate that an alternating cycle has period $q$ if and only if $\lambda$ is coprime to $q$. Furthermore, Lemma~3.6 in \cite{Alhakim11} states that for any nonzero element $\gamma\in Z_q$, the inverse image of the constant string $\gamma^n$ by $D_{\beta}$ contains precisely the generalized alternating strings that are translates of $\theta^{(\beta^{-1}\gamma)}_{n+1}$.

Returning to the first paragraph above, if $S$ contains a constant string $\gamma^n$, each sequence in $D_{\beta}^{-1}(S)$ will then contain a distinct translate of the generalised alternating string. Furthermore, if $gcd(\gamma,q)=1$, the unique generalized alternating string of each inverse cycle can be written as  $\theta^{(\beta^{-1}\gamma)}_{n+1}+\beta^{-1}\gamma i$. That is, the alternating cycle associated with $\theta^{(\beta^{-1}\gamma)}_{n+1}$ is of period $q$ and has one vertex on each inverse sequence. This is true because $\beta^{-1}\gamma$ is coprime to $q$. Therefore, none of the edges of the alternating cycle is an edge of a cycle in $D_{\beta}^{-1}(S)$. The inverse cycles are joined together by adding all but one edge in the alternating cycle, deleting and adding edges accordingly. We illustrate this construction using $q=5$ and $S=[001112]$, which has zero weight and is orientable of window length $3$ except for the constant string $111$. The inverse cycles by $D_1$ are\\

\noindent $T_0 =  [ 0 \ 0 \ 1 \ 2 \ 3 ]$;
$T_1 = [ 1 \ 1 \ 2 \ 3 \ 4 ]$;
$T_2 = [ 2 \ 2 \ 3 \ 4 \ 0 ]$;
$T_3 = [ 3 \ 3 \ 4 \ 0 \ 1 ]$;
and $T_4 = [ 4 \ 4 \ 0 \ 1 \ 2 ]$.\newline

\tikzset{every picture/.style={line width=0.75pt}} 

\begin{tikzpicture}[x=0.75pt,y=0.75pt,yscale=-1,xscale=1]

\draw    (92,40) -- (119.6,39.91) ;
\draw [shift={(121.6,39.9)}, rotate = 179.81] [color={rgb, 255:red, 0; green, 0; blue, 0 }  ][line width=0.75]    (10.93,-3.29) .. controls (6.95,-1.4) and (3.31,-0.3) .. (0,0) .. controls (3.31,0.3) and (6.95,1.4) .. (10.93,3.29)   ;
\draw    (307,39) -- (334.6,38.91) ;
\draw [shift={(336.6,38.9)}, rotate = 179.81] [color={rgb, 255:red, 0; green, 0; blue, 0 }  ][line width=0.75]    (10.93,-3.29) .. controls (6.95,-1.4) and (3.31,-0.3) .. (0,0) .. controls (3.31,0.3) and (6.95,1.4) .. (10.93,3.29)   ;
\draw    (236,39) -- (263.6,38.91) ;
\draw [shift={(265.6,38.9)}, rotate = 179.81] [color={rgb, 255:red, 0; green, 0; blue, 0 }  ][line width=0.75]    (10.93,-3.29) .. controls (6.95,-1.4) and (3.31,-0.3) .. (0,0) .. controls (3.31,0.3) and (6.95,1.4) .. (10.93,3.29)   ;
\draw    (163,39) -- (190.6,38.91) ;
\draw [shift={(192.6,38.9)}, rotate = 179.81] [color={rgb, 255:red, 0; green, 0; blue, 0 }  ][line width=0.75]    (10.93,-3.29) .. controls (6.95,-1.4) and (3.31,-0.3) .. (0,0) .. controls (3.31,0.3) and (6.95,1.4) .. (10.93,3.29)   ;
\draw    (356,28) .. controls (322.29,10.26) and (97.84,17.59) .. (67.23,27.3) ;
\draw [shift={(65.6,27.9)}, rotate = 336.5] [color={rgb, 255:red, 0; green, 0; blue, 0 }  ][line width=0.75]    (10.93,-3.29) .. controls (6.95,-1.4) and (3.31,-0.3) .. (0,0) .. controls (3.31,0.3) and (6.95,1.4) .. (10.93,3.29)   ;
\draw    (92,77) -- (119.6,76.91) ;
\draw [shift={(121.6,76.9)}, rotate = 179.81] [color={rgb, 255:red, 0; green, 0; blue, 0 }  ][line width=0.75]    (10.93,-3.29) .. controls (6.95,-1.4) and (3.31,-0.3) .. (0,0) .. controls (3.31,0.3) and (6.95,1.4) .. (10.93,3.29)   ;
\draw    (307,76) -- (334.6,75.91) ;
\draw [shift={(336.6,75.9)}, rotate = 179.81] [color={rgb, 255:red, 0; green, 0; blue, 0 }  ][line width=0.75]    (10.93,-3.29) .. controls (6.95,-1.4) and (3.31,-0.3) .. (0,0) .. controls (3.31,0.3) and (6.95,1.4) .. (10.93,3.29)   ;
\draw    (236,76) -- (263.6,75.91) ;
\draw [shift={(265.6,75.9)}, rotate = 179.81] [color={rgb, 255:red, 0; green, 0; blue, 0 }  ][line width=0.75]    (10.93,-3.29) .. controls (6.95,-1.4) and (3.31,-0.3) .. (0,0) .. controls (3.31,0.3) and (6.95,1.4) .. (10.93,3.29)   ;
\draw    (163,76) -- (190.6,75.91) ;
\draw [shift={(192.6,75.9)}, rotate = 179.81] [color={rgb, 255:red, 0; green, 0; blue, 0 }  ][line width=0.75]    (10.93,-3.29) .. controls (6.95,-1.4) and (3.31,-0.3) .. (0,0) .. controls (3.31,0.3) and (6.95,1.4) .. (10.93,3.29)   ;
\draw    (92,117) -- (119.6,116.91) ;
\draw [shift={(121.6,116.9)}, rotate = 179.81] [color={rgb, 255:red, 0; green, 0; blue, 0 }  ][line width=0.75]    (10.93,-3.29) .. controls (6.95,-1.4) and (3.31,-0.3) .. (0,0) .. controls (3.31,0.3) and (6.95,1.4) .. (10.93,3.29)   ;
\draw    (307,116) -- (334.6,115.91) ;
\draw [shift={(336.6,115.9)}, rotate = 179.81] [color={rgb, 255:red, 0; green, 0; blue, 0 }  ][line width=0.75]    (10.93,-3.29) .. controls (6.95,-1.4) and (3.31,-0.3) .. (0,0) .. controls (3.31,0.3) and (6.95,1.4) .. (10.93,3.29)   ;
\draw    (236,116) -- (263.6,115.91) ;
\draw [shift={(265.6,115.9)}, rotate = 179.81] [color={rgb, 255:red, 0; green, 0; blue, 0 }  ][line width=0.75]    (10.93,-3.29) .. controls (6.95,-1.4) and (3.31,-0.3) .. (0,0) .. controls (3.31,0.3) and (6.95,1.4) .. (10.93,3.29)   ;
\draw    (163,116) -- (190.6,115.91) ;
\draw [shift={(192.6,115.9)}, rotate = 179.81] [color={rgb, 255:red, 0; green, 0; blue, 0 }  ][line width=0.75]    (10.93,-3.29) .. controls (6.95,-1.4) and (3.31,-0.3) .. (0,0) .. controls (3.31,0.3) and (6.95,1.4) .. (10.93,3.29)   ;
\draw    (92,155) -- (119.6,154.91) ;
\draw [shift={(121.6,154.9)}, rotate = 179.81] [color={rgb, 255:red, 0; green, 0; blue, 0 }  ][line width=0.75]    (10.93,-3.29) .. controls (6.95,-1.4) and (3.31,-0.3) .. (0,0) .. controls (3.31,0.3) and (6.95,1.4) .. (10.93,3.29)   ;
\draw    (307,154) -- (334.6,153.91) ;
\draw [shift={(336.6,153.9)}, rotate = 179.81] [color={rgb, 255:red, 0; green, 0; blue, 0 }  ][line width=0.75]    (10.93,-3.29) .. controls (6.95,-1.4) and (3.31,-0.3) .. (0,0) .. controls (3.31,0.3) and (6.95,1.4) .. (10.93,3.29)   ;
\draw    (236,154) -- (263.6,153.91) ;
\draw [shift={(265.6,153.9)}, rotate = 179.81] [color={rgb, 255:red, 0; green, 0; blue, 0 }  ][line width=0.75]    (10.93,-3.29) .. controls (6.95,-1.4) and (3.31,-0.3) .. (0,0) .. controls (3.31,0.3) and (6.95,1.4) .. (10.93,3.29)   ;
\draw    (163,154) -- (190.6,153.91) ;
\draw [shift={(192.6,153.9)}, rotate = 179.81] [color={rgb, 255:red, 0; green, 0; blue, 0 }  ][line width=0.75]    (10.93,-3.29) .. controls (6.95,-1.4) and (3.31,-0.3) .. (0,0) .. controls (3.31,0.3) and (6.95,1.4) .. (10.93,3.29)   ;
\draw    (90,189) -- (117.6,188.91) ;
\draw [shift={(119.6,188.9)}, rotate = 179.81] [color={rgb, 255:red, 0; green, 0; blue, 0 }  ][line width=0.75]    (10.93,-3.29) .. controls (6.95,-1.4) and (3.31,-0.3) .. (0,0) .. controls (3.31,0.3) and (6.95,1.4) .. (10.93,3.29)   ;
\draw    (305,188) -- (332.6,187.91) ;
\draw [shift={(334.6,187.9)}, rotate = 179.81] [color={rgb, 255:red, 0; green, 0; blue, 0 }  ][line width=0.75]    (10.93,-3.29) .. controls (6.95,-1.4) and (3.31,-0.3) .. (0,0) .. controls (3.31,0.3) and (6.95,1.4) .. (10.93,3.29)   ;
\draw    (234,188) -- (261.6,187.91) ;
\draw [shift={(263.6,187.9)}, rotate = 179.81] [color={rgb, 255:red, 0; green, 0; blue, 0 }  ][line width=0.75]    (10.93,-3.29) .. controls (6.95,-1.4) and (3.31,-0.3) .. (0,0) .. controls (3.31,0.3) and (6.95,1.4) .. (10.93,3.29)   ;
\draw    (161,188) -- (188.6,187.91) ;
\draw [shift={(190.6,187.9)}, rotate = 179.81] [color={rgb, 255:red, 0; green, 0; blue, 0 }  ][line width=0.75]    (10.93,-3.29) .. controls (6.95,-1.4) and (3.31,-0.3) .. (0,0) .. controls (3.31,0.3) and (6.95,1.4) .. (10.93,3.29)   ;
\draw    (365.6,203.9) .. controls (356.69,212.81) and (102.76,218.78) .. (58.87,200.46) ;
\draw [shift={(57.6,199.9)}, rotate = 25.41] [color={rgb, 255:red, 0; green, 0; blue, 0 }  ][line width=0.75]    (10.93,-3.29) .. controls (6.95,-1.4) and (3.31,-0.3) .. (0,0) .. controls (3.31,0.3) and (6.95,1.4) .. (10.93,3.29)   ;
\draw    (141.6,44.9) -- (141.6,65.9) ;
\draw [shift={(141.6,67.9)}, rotate = 270] [color={rgb, 255:red, 0; green, 0; blue, 0 }  ][line width=0.75]    (10.93,-3.29) .. controls (6.95,-1.4) and (3.31,-0.3) .. (0,0) .. controls (3.31,0.3) and (6.95,1.4) .. (10.93,3.29)   ;
\draw    (139.6,161.9) -- (139.6,182.9) ;
\draw [shift={(139.6,184.9)}, rotate = 270] [color={rgb, 255:red, 0; green, 0; blue, 0 }  ][line width=0.75]    (10.93,-3.29) .. controls (6.95,-1.4) and (3.31,-0.3) .. (0,0) .. controls (3.31,0.3) and (6.95,1.4) .. (10.93,3.29)   ;
\draw    (140.6,123.9) -- (140.6,144.9) ;
\draw [shift={(140.6,146.9)}, rotate = 270] [color={rgb, 255:red, 0; green, 0; blue, 0 }  ][line width=0.75]    (10.93,-3.29) .. controls (6.95,-1.4) and (3.31,-0.3) .. (0,0) .. controls (3.31,0.3) and (6.95,1.4) .. (10.93,3.29)   ;
\draw    (141.6,84.9) -- (141.6,105.9) ;
\draw [shift={(141.6,107.9)}, rotate = 270] [color={rgb, 255:red, 0; green, 0; blue, 0 }  ][line width=0.75]    (10.93,-3.29) .. controls (6.95,-1.4) and (3.31,-0.3) .. (0,0) .. controls (3.31,0.3) and (6.95,1.4) .. (10.93,3.29)   ;
\draw    (101.5,88) -- (112.1,65.9) ;
\draw    (172.5,165) -- (183.1,142.9) ;
\draw    (172.5,127) -- (183.1,104.9) ;
\draw    (172.5,87) -- (183.1,64.9) ;
\draw    (172.5,50) -- (183.1,27.9) ;
\draw    (99.5,200) -- (110.1,177.9) ;
\draw    (101.5,166) -- (112.1,143.9) ;
\draw    (101.5,128) -- (112.1,105.9) ;
\draw    (81.6,177.9) .. controls (121.2,148.2) and (164.72,193.97) .. (204.4,165.78) ;
\draw [shift={(205.6,164.9)}, rotate = 143.13] [color={rgb, 255:red, 0; green, 0; blue, 0 }  ][line width=0.75]    (10.93,-3.29) .. controls (6.95,-1.4) and (3.31,-0.3) .. (0,0) .. controls (3.31,0.3) and (6.95,1.4) .. (10.93,3.29)   ;
\draw    (82.6,64.9) .. controls (122.2,35.2) and (165.72,80.97) .. (205.4,52.78) ;
\draw [shift={(206.6,51.9)}, rotate = 143.13] [color={rgb, 255:red, 0; green, 0; blue, 0 }  ][line width=0.75]    (10.93,-3.29) .. controls (6.95,-1.4) and (3.31,-0.3) .. (0,0) .. controls (3.31,0.3) and (6.95,1.4) .. (10.93,3.29)   ;
\draw    (82.6,104.9) .. controls (122.2,75.2) and (165.72,120.97) .. (205.4,92.78) ;
\draw [shift={(206.6,91.9)}, rotate = 143.13] [color={rgb, 255:red, 0; green, 0; blue, 0 }  ][line width=0.75]    (10.93,-3.29) .. controls (6.95,-1.4) and (3.31,-0.3) .. (0,0) .. controls (3.31,0.3) and (6.95,1.4) .. (10.93,3.29)   ;
\draw    (84.6,142.9) .. controls (124.2,113.2) and (167.72,158.97) .. (207.4,130.78) ;
\draw [shift={(208.6,129.9)}, rotate = 143.13] [color={rgb, 255:red, 0; green, 0; blue, 0 }  ][line width=0.75]    (10.93,-3.29) .. controls (6.95,-1.4) and (3.31,-0.3) .. (0,0) .. controls (3.31,0.3) and (6.95,1.4) .. (10.93,3.29)   ;

\draw (52,31) node [anchor=north west][inner sep=0.75pt]   [align=left] {0012};
\draw (124,31) node [anchor=north west][inner sep=0.75pt]   [align=left] {0123};
\draw (340,30) node [anchor=north west][inner sep=0.75pt]   [align=left] {3001};
\draw (269,30) node [anchor=north west][inner sep=0.75pt]   [align=left] {2300};
\draw (197,30) node [anchor=north west][inner sep=0.75pt]   [align=left] {1230};
\draw (52,68) node [anchor=north west][inner sep=0.75pt]   [align=left] {1123};
\draw (124,68) node [anchor=north west][inner sep=0.75pt]   [align=left] {1234};
\draw (340,67) node [anchor=north west][inner sep=0.75pt]   [align=left] {4112};
\draw (269,67) node [anchor=north west][inner sep=0.75pt]   [align=left] {3411};
\draw (197,67) node [anchor=north west][inner sep=0.75pt]   [align=left] {2341};
\draw (52,108) node [anchor=north west][inner sep=0.75pt]   [align=left] {2234};
\draw (124,108) node [anchor=north west][inner sep=0.75pt]   [align=left] {2340};
\draw (340,107) node [anchor=north west][inner sep=0.75pt]   [align=left] {0223};
\draw (269,107) node [anchor=north west][inner sep=0.75pt]   [align=left] {4022};
\draw (197,107) node [anchor=north west][inner sep=0.75pt]   [align=left] {3402};
\draw (52,146) node [anchor=north west][inner sep=0.75pt]   [align=left] {3340};
\draw (124,146) node [anchor=north west][inner sep=0.75pt]   [align=left] {3401};
\draw (340,145) node [anchor=north west][inner sep=0.75pt]   [align=left] {1334};
\draw (269,145) node [anchor=north west][inner sep=0.75pt]   [align=left] {0133};
\draw (197,145) node [anchor=north west][inner sep=0.75pt]   [align=left] {4013};
\draw (50,180) node [anchor=north west][inner sep=0.75pt]   [align=left] {4401};
\draw (122,180) node [anchor=north west][inner sep=0.75pt]   [align=left] {4012};
\draw (338,179) node [anchor=north west][inner sep=0.75pt]   [align=left] {2440};
\draw (267,179) node [anchor=north west][inner sep=0.75pt]   [align=left] {1244};
\draw (195,179) node [anchor=north west][inner sep=0.75pt]   [align=left] {0124};
\end{tikzpicture}


These $4$-window cycles are $o$-disjoint and can therefore be `stitched together' into one
orientable sequence of order $4$. We start along the top sequence, then move along the alternating sequence corresponding to $0123$, crossing each cycle exactly at one vertex. Reaching down to the last cycle, we exhaust that cycle and work our way up to the previous cycle and so on. This is illustrated in the diagram on the right. The resulting cycle is (where a substring between dots comes from one inverse cycle):
\[
00123.4.0.1.2.44401.33340.22234.11123.0
\]
The resulting sequence is orientable, but not negative orientable. To use it for another iteration, it just needs to be pruned around the joining points to recover the negative orientable property, keeping the zero weight of the sequence and making sure that, e.g.\ 1111 is a string. One such valid alteration is
\[
0023.4.1.2.44401.33340.22234.1111.0
\]

Observe that the use of cycle-joining --- in particular to construct de Bruijn sequences --- has a long history; for further details see, for example, Sawada et al. \cite{Sawada23}.


\subsection{Maintaining maximum order}

The modification of the sequence $T$ arising from the inverse application of the homomorphism $D$
or $A$ that we use introduces one or two extra terms into its ring sequence to obtain a new
ring sequence. By this means we obtain a sequence whose weight differs from that of $T$. By
convention, in the following definition and elsewhere, should $j=0$ we understand $s_{-1}$ to be
$s_{m-1}$, where $m$ is the period of the sequence $(s_i)$.

We begin by considering the application of the inverse of $D$ recursively.

\begin{definition}
Let $S=(s_i)$ be a periodic $q$-ary sequence. For $a \in \mathbb{Z}_q$ we write $a^t$ for a string of $t$ consecutive terms $a$ of $S$. A {\emph run} of $a$ in $S$ is a string  $s_j,\dots,s_{j+t-1}=a^t$ with $s_{j-1},s_{j+t} \not=a$. A run, $a^t$, is {\emph maximal} in $S$ if any string $a^{t^{\prime}}$ of $S$ has $t^{\prime} \le t$  and further is {\emph inverse maximal} if, in addition, any string $1^{t^{\prime}} + (q-1-a)^{t^{\prime}}$ of $S$ has $t^{\prime} \le t$.
\end{definition}

We note that if $a^t$ is a maximal run of $S$ then $(a + b)^t$ is a maximal run of the translate by $b$ of $S$ and there exists a shift of such a translate that has ring sequence whose first $t$ terms are $a + b$. Moreover if $S$ is orientable (respectively negative orientable) then so is this shifted translate.

\begin{definition}
Let $ a \in \mathbb{Z}_q$. Suppose that the ring sequence of a periodic sequence $S$ is $[s_0,s_1,\cdots,s_{m-1}]$ and $r$ is the smallest non-negative integer such that
$a^t=s_r,s_{r+1},\cdots,s_{r+t-1}$ is a maximal run for $a \in \mathbb{Z}_q$. Define the sequence $\mathcal{E}_a(S)$   to be the sequence with
ring sequence
 \[ [s_0,s_1,\cdots,s_{r-1},a,s_r,s_{r+1},\cdots,s_{m-1}] \]
i.e.\ where the occurrence of $a^t$ is replaced with $a^{t+1}$.
\end{definition}

\begin{definition}
Let $ a \in \mathbb{Z}_q$. Suppose that the ring sequence of a periodic sequence $S$ is $[s_0,s_1,\ldots,s_{m-1}]$ and $r$ is the smallest non-negative integer such that
$a^t=s_r,s_{r+1},\cdots,s_{r+t-1}$ is a maximal run for $a= 1-w_q(S)$. If $a=0$ define the sequence $\mathcal{E}(S)$  to be $S$, and if $a \not=0$ define $\mathcal{E}(S)$ to be $\mathcal{E}_a(S)$.
\end{definition}

We can now state a key result.

\begin{lemma}  \label{lemma-E}
Suppose $S=(s_i)$ is an orientable (respectively negative orientable) sequence of order $n$ and
period $m$. If $w_q(S) \not=1$ and $S$ does not contain a string $a^{n-1}$ for
$a=1-w_q(S)$ then $\mathcal{E}_a(S)$ is an orientable (respectively negative  orientable)
sequence of order $n$, period  $m+1$, and having $w_q(\mathcal{E}_a(S))=1$.
\end{lemma}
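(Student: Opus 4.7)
The weight part is immediate: inserting one $a$ gives $w_q(\mathcal{E}_a(S)) \equiv w_q(S) + a = w_q(S) + (1 - w_q(S)) \equiv 1 \pmod q$. Once the $n$-window property is verified, the period of $\mathcal{E}_a(S)$ must equal $m+1$ exactly, since any proper divisor of $m+1$ would force some $n$-tuple to repeat within the ring sequence.

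The core of the proof is a careful accounting of how the $n$-tuples of $S' := \mathcal{E}_a(S)$ relate to those of $S$. After insertion at position $r$, the sequence $S'$ contains the extended run $a^{t+1}$ at positions $r, r+1, \ldots, r+t$; by hypothesis $t+1 \leq n-1$, so no $n$-tuple lies entirely inside this block. A direct computation shows that each $n$-tuple $\mathbf{s}'_n(j)$ is either \emph{inherited} from $S$ --- namely $\mathbf{s}'_n(j) = \mathbf{s}_n(j)$ when $j \leq r - n + t$, and $\mathbf{s}'_n(j) = \mathbf{s}_n(j-1)$ when $j \geq r + 1$ --- or is one of the $n - t$ genuinely \emph{new} tuples at positions $j \in \{r - n + t + 1, \ldots, r\}$, each of the form $(X_L, a^{t+1}, Y_R)$ with $L + R = n - t - 1$, where $X_L = (s_{r-L}, \ldots, s_{r-1})$ and $Y_R = (s_{r+t}, \ldots, s_{r+t+R-1})$.

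For the $n$-window property, the distinguishing feature is that every new tuple contains a maximal $a$-run of length exactly $t+1$, whereas every other $n$-tuple of $S'$ inherits a maximal $a$-run of length at most $t$ from $S$. Hence a new tuple cannot coincide with any non-new tuple of $S'$; among the new tuples themselves, the position of the $a^{t+1}$ block distinguishes them. Any repetition among inherited tuples would translate to a repetition in $S$, contradicting its $n$-window property.

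For orientability, the reverse of a new tuple also contains $a^{t+1}$, so it can only match another (or the same) new tuple. A direct computation shows that the reverse of the new tuple at parameter $(L, R)$ equals the new tuple at $(R, L)$ precisely when $s_{r-j} = s_{r+t+j-1}$ for every $j$ with $1 \leq j \leq \max(L, R)$. The main obstacle --- and the crux of the proof --- is to rule out this symmetry for every admissible $(L, R)$. The plan is to match each such pair to an $n$-tuple of $S$ that spans the original run $a^t$ and invoke the orientability of $S$ (or, in the negative-orientable case, its negative orientability) on that tuple, thereby producing the required index $j$ with $s_{r-j} \neq s_{r+t+j-1}$.
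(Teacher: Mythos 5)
Your accounting of the weight, the period, and the inherited-versus-new decomposition of the $n$-tuples of $\mathcal{E}_a(S)$ matches the paper's proof (your new tuples at parameters $(L,R)$ are the paper's $\mathbf{v}_i$), as does the observation that only the mutual reverses of the new tuples remain to be excluded. The difficulty is that you stop at exactly the step you yourself identify as the crux: you derive the correct criterion (the new tuple at $(L,R)$ equals the reverse of the one at $(R,L)$ iff $s_{r-j}=s_{r+t+j-1}$ for all $1\le j\le\max(L,R)$) and then only announce a ``plan'' to contradict it via the orientability of $S$. That plan is not executed, and it is not routine. The equalities $s_{r-j}=s_{r+t+j-1}$ pair indices summing to $2r+t-1$; two $n$-windows of $S$ straddling the run $a^t$ are reverses of one another precisely when $s_x=s_{2r+t-1-x}$ holds across a window of length $n$, and the most economical such pair (the windows starting at $r-\lfloor(n-t)/2\rfloor$ and $r-\lceil(n-t)/2\rceil$) needs the matching condition out to $j=\lceil(n-t)/2\rceil$. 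Your hypothesis supplies it only out to $j=\max(L,R)$. In the balanced case $L=R=(n-1-t)/2$ --- a self-symmetric new tuple, which does violate orientability since $i=j$ is permitted in the definition --- you have $\max(L,R)=\lceil(n-t)/2\rceil-1$ (here $n-t$ is odd), one index short of what the reduction requires. So the argument you gesture at does not close as stated, and you must say explicitly how that boundary case is handled. This is exactly where the paper's proof does its work, deducing $\mathbf{u}_{\lfloor(n-1-t)/2\rfloor}=\mathbf{u}^R_{\lfloor(n-t)/2\rfloor}$ from $\mathbf{v}_i=\mathbf{v}^R_{n-1-t-i}$ in a single (and itself rather delicate) step.

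A secondary omission: you treat the negative-orientable variant as a parenthetical, but there the relevant forbidden coincidences involve the \emph{negated} reverse of a new tuple, which contains the block $(-a)^{t+1}$ rather than $a^{t+1}$; the ``$S$ has no run of $a$ longer than $t$'' argument that separates new tuples from inherited ones therefore does not transfer verbatim and needs its own justification.
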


\begin{proof}
Suppose that $w_q(S) \not=1$ so that $a=1-w_q(S)\not=0$.
Let $t$ be the length of a maximal run of $a$ in $S$. The fact
that $w_q(\mathcal{E}_a(S))=1$ follows immediately from the above definitions. Again by definition
the period of $\mathcal{E}_a(S)$ divides $m+1$, and is precisely $m+1$ because its ring sequence
$[s_0,s_1,\cdots,s_{r-1},a,s_r,s_{r+1},\cdots,s_{m-1}]$ contains exactly one occurrence of $a^{t+1}$ since $t$ is the length of a maximal run of $a$ in $S$ and only one occurrence of $a^t$ is replaced by $a^{t+1}$ to obtain $\mathcal{E}_a(S)$.

It remains to show that $\mathcal{E}_a(S)$ is an orientable (respectively  negative orientable) sequence.  We only need to examine the
$n$-tuples which include the string containing the inserted element.  Inserting this single element means that the following
$n-1-t$ $n$-tuples that occur in $S$ (where the subscripts are computed modulo $m$):
\[ \mathbf{u}_i  =  (s_{r-i},\dots, s_{r-1}, a^t, s_{r+t}, \dots,s_{r+n-i-1}),~~~(1\leq i\leq n-1-t), \]
are replaced in $\mathcal{E}_a(S)$ by the following $n-t$ $n$-tuples:
\[ \mathbf{v}_i =  (s_{r-i},\dots, s_{r-1}, a^{t+1},s_{r+t},\dots, s_{r+n-i-2}),~~~(0\leq i\leq n-1-t). \]

Now all  of the $\mathbf{v}_i$ tuples contain a run $a^{t+1}$, and hence they are all distinct and cannot occur in $S$ and nor do their reverses (respectively negative reverses). The only remaining task is to show that no $\mathbf{v}_i$ equals  $\mathbf{v}_j^R$ (respectively $-\mathbf{{v}}_j^R$) for any $i$ and $j$. As each $n$-tuple contains $a^{t+1}$ and $t+1 \le n-1$ this follows if $\mathbf{v}_i \not= \mathbf{v}_{n-1-t-i}^R$ (respectively $\mathbf{v}_i\not=-\mathbf{v}_{n-1-t-i}^R$), $i=0,1,\dots,\lfloor \frac{n-1-t}{2} \rfloor$. However, if $\mathbf{v}_i=\mathbf{v}_{n-1-t-i}^R$ (respectively $\mathbf{v}_i= -\mathbf{v}_{n-1-t-i}^R$), for some $i$ then it immediately follows that $\mathbf{u}_{\lfloor \frac{n-1-t}{2} \rfloor}=\mathbf{u}^R_{\lfloor \frac{n-t}{2} \rfloor}$ (respectively $\mathbf{u}_{\lfloor \frac{n-1-t}{2} \rfloor}=-\mathbf{u}^R_{\lfloor \frac{n-t}{2} \rfloor}$), which contradicts the assumption on $S$. \qed
\end{proof}

This then enables us to give a means to recursively generate `long' orientable sequences.  We first
define a subclass of orientable (respectively negative orientable) sequences

\begin{definition}
An orientable (respectively negative orientable) sequence with the property that any run of $0$ has
length at most $n-2$ is said to be \emph{good}.
\end{definition}

\begin{theorem}
Suppose $S=(s_i)$ is a good orientable (respectively negative orientable) sequence of order $n$, period $m$, and with $w_q(S)=1$. If $T\in
D^{-1}(S)$ then, for $a =1-w_q(T)$, $\mathcal{E}_a(T)$ is a good negative orientable (respectively  orientable) sequence of order $n+1$,
period $qm+1$, and with $w_q(\mathcal{E}_a(T))=1$.
\end{theorem}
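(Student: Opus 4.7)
The plan is to chain together three previously established results. First, the hypothesis $w_q(S)=1$ means that $w_q(S)$ has additive order $h=q$ modulo $q$, so Theorem~\ref{thm Lempel-orientable} (in the orientable case) or Theorem~\ref{thm Lempel-negative-orientable} (in the negative orientable case) immediately tells us that $T\in D^{-1}(S)$ is a negative orientable (resp.\ orientable) sequence of order $n+1$ and period $qm$. Moreover, Corollary~\ref{cor Lempel-orientable} says that the maximal run of any symbol $b\in\mathbb{Z}_q$ in $T$ has length $t+1$, where $t$ is the length of the maximal run of $0$ in $S$; since $S$ is good we have $t\leq n-2$, and so every run in $T$ has length at most $n-1$.

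Second, I would compute $w_q(T)$ in order to verify the hypotheses of Lemma~\ref{lemma-E}. Using the relation $t_{i+m}\equiv t_i+w_q(S)\equiv t_i+1\pmod q$, the values $t_i,t_{i+m},\ldots,t_{i+(q-1)m}$ run through every residue of $\mathbb{Z}_q$ for each fixed $i$, and therefore
\[ w_q(T)\equiv m\cdot\frac{q(q-1)}{2}\pmod q. \]
If $q$ is odd this is $0$; if $q$ is even it is either $0$ or $q/2$. In every case of interest (i.e.\ $q\geq 3$) we conclude $w_q(T)\neq 1$, and hence $a=1-w_q(T)$ is nonzero.

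Third, I would apply Lemma~\ref{lemma-E} to $T$, viewed as a sequence of order $n+1$. Its two hypotheses are satisfied: $w_q(T)\neq 1$ by the weight calculation, and $T$ contains no string $a^n$ because every run in $T$ has length at most $n-1$. The lemma then delivers exactly that $\mathcal{E}_a(T)$ is a negative orientable (resp.\ orientable) sequence of order $n+1$, of period $qm+1$, and with $w_q(\mathcal{E}_a(T))=1$. For the goodness of $\mathcal{E}_a(T)$, I would observe that because $a\neq 0$ the operation $\mathcal{E}_a$ only lengthens the chosen maximal run of $a$ and leaves every run of $0$ unchanged. Hence the longest run of $0$ in $\mathcal{E}_a(T)$ equals that in $T$, which is at most $n-1=(n+1)-2$, as required for goodness at order $n+1$.

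The main obstacle, and the only step that is not a direct citation of earlier results, is the weight computation together with the observation that $a\neq 0$; this nonzero condition is what both activates Lemma~\ref{lemma-E} and simultaneously prevents the inserted symbol from inflating any run of $0$ beyond the goodness threshold, so each of the four claimed properties (negative orientability/orientability, order $n+1$, period $qm+1$, weight $1$, and goodness) drops out together.
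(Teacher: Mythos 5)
Your proof is correct and follows essentially the same route as the paper's: cite Theorems~\ref{thm Lempel-orientable}/\ref{thm Lempel-negative-orientable} and Corollary~\ref{cor Lempel-orientable} for the properties of $T$, note $w_q(T)=m\frac{q(q-1)}{2}\neq 1$, apply Lemma~\ref{lemma-E}, and observe that $\mathcal{E}_a$ with $a\neq 0$ cannot lengthen a run of $0$. Your explicit evaluation of $w_q(T)$ as $0$ or $q/2$ (and the attendant remark that this needs $q\geq 3$) is slightly more careful than the paper, which simply asserts the weight is not $1$.
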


\begin{proof}
The fact that $T$ is a negative orientable (respectively orientable) sequence of order $n+1$ and period $qm$ follows immediately from
Theorem~\ref{thm Lempel-orientable} (respectively Theorem~\ref{thm Lempel-negative-orientable}) and Corollary~\ref{cor Lempel-orientable}; we also know the weight of $T$ is $m\frac{q(q-1)}{2}\not=1$.  Since $0^t$ occurs in $S$, each of $a^{t+1}$ for $a \in \mathbb{Z}_q$ occurs in $T$, as
$D^{-1}(0^{t})=\{a^{t+1} | a \in \mathbb{Z_q}\}$. Moreover, for each $a \in \mathbb{Z}_q$, $a^{t+1}$ is a maximal run in $T$  as $D(a^{t^{\prime}+1})=0^{t^{\prime}}$ for any $a^{t^{\prime}+1}$ in $T$. This means that $T$ is good and also the conditions of
Lemma~\ref{lemma-E} apply.  This in turn means that $\mathcal{E}_a(T)$ is a negative orientable (respectively  orientable) sequence of
order $n+1$ and weight $1$.  The fact that $\mathcal{E}_a(T)$ is good follows from observing that
applying $\mathcal{E}_a$ cannot affect the occurrences of runs of $0$.  Finally,
$\mathcal{E}_a(T)$ has period $km+1$. \qed
\end{proof}

This immediately gives the following result.

\begin{corollary}  \label{Cor-length1}
Suppose $S_n$ is a good orientable (respectively negative orientable) sequence  of order $n$ and period $m_n$ and with $w_q(S)=1$. Recursively define the
sequences $S_{i+1}=\mathcal{E}_a(D^{-1}(S_i))$, where $a = 1-w_q(D^{-1}(S_i))$, for $i\geq n$, and suppose $S_i$ has period $m_i$
($i>n$). Then, $S_{i}$ is a good orientable or negative orientable sequence for every $i\ge n$, and
$m_{n+j+1}=qm_{n+j}+1$ for every $j\geq 0$. $S_{i}$ is an orientable (respectively negative orientable) sequence when $i-n$ is even and a negative orientable (respectively orientable) sequence when $i-n$ is odd.
\end{corollary}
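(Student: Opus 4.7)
The plan is to prove the corollary by straightforward induction on $j \geq 0$, applying the preceding theorem at each step. The key observation is that the preceding theorem takes a good orientable (resp.\ negative orientable) sequence of weight $1$ and produces a good negative orientable (resp.\ orientable) sequence of weight $1$ and order increased by one, so the hypotheses required for the next iteration are automatically re-established, with only the ``orientable'' and ``negative orientable'' labels swapping.

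For the base case $j = 0$, we have $S_n$ given by hypothesis: a good orientable (resp.\ negative orientable) sequence of order $n$, period $m_n$, and with $w_q(S_n)=1$. For the inductive step, assume $S_{n+j}$ is a good sequence of order $n+j$ with $w_q(S_{n+j})=1$ and period $m_{n+j}$, whose orientability type is orientable (resp.\ negative orientable) when $j$ is even and negative orientable (resp.\ orientable) when $j$ is odd. Applying the preceding theorem with $S = S_{n+j}$, and setting $a = 1 - w_q(D^{-1}(S_{n+j}))$, the sequence $S_{n+j+1} = \mathcal{E}_a(D^{-1}(S_{n+j}))$ is a good sequence of order $n+j+1$, of the opposite orientability type, with weight $1$ and period $qm_{n+j}+1$. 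This simultaneously yields the recursion $m_{n+j+1} = qm_{n+j} + 1$ and the flip in orientability type, closing the induction.

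Tracking the alternation through successive iterations gives exactly the claim that $S_i$ is orientable (resp.\ negative orientable) when $i-n$ is even and negative orientable (resp.\ orientable) when $i-n$ is odd. There is essentially no obstacle here beyond bookkeeping: every ingredient is packaged in the preceding theorem, and the only care needed is to maintain the ``respectively'' pairing consistently as it swaps at each step of the iteration.
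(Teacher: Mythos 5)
Your proposal is correct and matches the paper's own (much terser) argument: both simply iterate the preceding theorem, which re-establishes the goodness, weight-$1$, and period hypotheses at each step while swapping the orientable/negative-orientable labels. The induction and bookkeeping you spell out is exactly what the paper leaves implicit.
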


\begin{proof}
Now, for $i \ge n$, $\mathcal{E}_a(D^{-1}(S_{i}))$ has weight equal to $1$ and so $S_{i+1}$ has period $qm_i+1$. This immediately yields the result. \qed
\end{proof}

Simple numerical calculations give the following.

\begin{corollary}
Suppose the sequences $(S_i)$ are defined as in Corollary~\ref{Cor-length1}.  Then
$m_{n+j}=q^{j}m_n+\frac{q^{j}-1}{q-1}$.
\end{corollary}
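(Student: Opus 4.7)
The plan is straightforward induction on $j$, using the recursion $m_{n+j+1} = q m_{n+j} + 1$ supplied by Corollary~\ref{Cor-length1}. This is a standard first-order linear recurrence with constant forcing term, so the closed form $m_{n+j} = q^j m_n + \frac{q^j-1}{q-1}$ is exactly what one expects, and there is no genuine obstacle; the only care needed is getting the indices to align properly at the base case.

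For the base case $j=0$ I observe that $q^0 m_n + \frac{q^0 - 1}{q-1} = m_n + 0 = m_n$, as required. For the inductive step, assuming the formula holds for some $j \geq 0$, I apply Corollary~\ref{Cor-length1} to write
\[
m_{n+j+1} = q m_{n+j} + 1 = q\left(q^j m_n + \frac{q^j - 1}{q-1}\right) + 1 = q^{j+1} m_n + \frac{q^{j+1} - q}{q-1} + 1.
\]
Combining the last two terms using a common denominator gives $\frac{q^{j+1} - q + (q-1)}{q-1} = \frac{q^{j+1} - 1}{q-1}$, which completes the induction.

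The only mild subtlety is that the expression $\frac{q^j - 1}{q-1}$ might appear to require $q \neq 1$, but this is never an issue because the setting fixes $q \geq 2$ throughout the paper. Alternatively, one could present the same argument by recognising the sum $\sum_{k=0}^{j-1} q^k$ as the number of times the additive constant $1$ accumulates under $j$ iterations of multiplication by $q$, which is exactly the geometric series $\frac{q^j-1}{q-1}$; either formulation is essentially immediate, which is why the author calls this a corollary obtained by ``simple numerical calculations.''
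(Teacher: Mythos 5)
Your induction is correct and is precisely the ``simple numerical calculation'' the paper has in mind: the paper gives no explicit proof, merely asserting that the closed form follows from the recurrence $m_{n+j+1}=qm_{n+j}+1$ of Corollary~\ref{Cor-length1}, and your base case and inductive step supply exactly that verification.
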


We now consider sequences generated by applying the inverse of homomorphism $A$ recursively.

\begin{definition}
Let $S=(s_i)$ be a periodic $q$-ary sequence. For $a \in \mathbb{Z}_q$ we write $\underline{a}^t$ for a string of $t$ consecutive terms alternating $a$ and $-a$, beginning with $a$, of $S$. A {\emph signed run} of $a$ in $S$ is a string  $s_j,\dots,s_{j+t-1}=\underline{a}^t$ with $s_{j-1}\not=-a$ and $s_{j+t} \not=a$ if $t$ is even and $s_{j+t} \not=-a$ if $t$ is odd. A signed run, $\underline{a}^t$, is said to be \emph{maximal} in $S$ if any string $\underline{a}^{t^{\prime}}$ of $S$ has $t^{\prime} \le t$, and further is said to be \emph{inverse maximal} if also any string $\underline{-a}^{t^{\prime}}$ of $S$ has $t^{\prime} \le t$.
\end{definition}

We note that if $\underline{a}^t$ is a maximal run of $S$ then $\underline{(a + b)}^t$ is a maximal run of the signed translate by $b$ of $S$ and there exists a shift of such a translate that has ring sequence whose first $t$ terms form the string $\underline{a + b}^t$. Moreover if $S$ is orientable  then so is this shifted translate.


\begin{definition}
Let $ a \in \mathbb{Z}_q$. Suppose that the ring sequence of a periodic sequence $S$ is $[s_0,s_1,\ldots,s_{m-1}]$ and $r$ is the smallest non-negative integer such that
$\underline{a}^t=s_r,s_{r+1},\cdots,s_{r+t-1}$ is a maximal signed run for $a \in \mathbb{Z}_q$. Define the sequence $\mathcal{E}_a(S)$   to be the sequence with
ring sequence
\[ [s_0,s_1,\cdots,s_{r-1},a,-a,s_r,s_{r+1},\cdots,s_{m-1}] \]
i.e.\ where the occurrence of $\underline{a}^t$ is replaced with $\underline{a}^{t+2}$.
\end{definition}

We have the following result.

\begin{lemma}  \label{lemma-EX}
Suppose $S=(s_i)$ is an orientable sequence of order $n$ and period $m$. If $w_q(S) \not=1$ and
there exists $a \in \mathbb{Z}_q$ with $w_q(S)-1+2a=0$ and $S$ does not contain a string $a^{n-1}$
then $\mathcal{E}_a(S)$ is an orientable sequence of order $n$, period  $m+2$, and having
$w_q(\mathcal{E}_a(S))=1$ or $w_q(\mathcal{E}_a(S))=q-1$ .
\end{lemma}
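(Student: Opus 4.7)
The plan is to mirror the proof of Lemma~\ref{lemma-E}, keeping the same three-step template (period, weight, orientability), but adapted to the signed-run setting in which $\mathcal{E}_a$ inserts the pair $a,-a$ instead of a single symbol. I would read the hypothesis ``$S$ does not contain a string $a^{n-1}$'' as referring to the relevant signed run $\underline{a}^{n-1}$ in this context, so that $t\le n-2$ and the extended run $\underline{a}^{t+2}$ still sits inside a single $n$-window.

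For the period: by construction $\underline{a}^t$ is the unique maximal signed run of length $t$ in $S$, so $\underline{a}^{t+2}$ is the unique run of its length in $\mathcal{E}_a(S)$. Any proper divisor of $m+2$ would force a second occurrence, so the period is exactly $m+2$. For the weight, I would compute the contribution of the inserted pair $a,-a$ to the relevant sum defining $w_q$ (with the alternating-sign convention appropriate for the $A$-homomorphism, as in the definition of $w^A_q$). With that sign convention the net effect of the insertion is $\pm 2a$, the sign depending on the parity of $m-r$, and applying the hypothesis $2a=1-w_q(S)$ yields $w_q(\mathcal{E}_a(S))=1$ in one parity case and $w_q(\mathcal{E}_a(S))\equiv q-1$ in the other.

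For orientability, I would follow the template at the end of the proof of Lemma~\ref{lemma-E}. Any $n$-tuple of $\mathcal{E}_a(S)$ that does not meet the inserted pair is inherited unchanged from $S$. Any new $n$-tuple $\mathbf{v}_i$ that does meet the inserted pair contains the whole signed run $\underline{a}^{t+2}$ of length $t+2>t$, which does not appear in $S$ by maximality of $t$, so no $\mathbf{v}_i$ can coincide with an $n$-tuple of $S$ or with the reverse of one. It therefore suffices to rule out $\mathbf{v}_i=\mathbf{v}_j^R$ among the new tuples themselves; such an equality would, exactly as in Lemma~\ref{lemma-E}, pull back (after stripping the common signed run) to an equality of two of the original $n$-tuples of $S$ overlapping $\underline{a}^t$ with the corresponding reverses, contradicting the orientability of $S$.

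The main obstacle I anticipate is the reversal bookkeeping for the signed run. Unlike a constant run $a^t$, the reverse of $\underline{a}^{t+2}$ equals $\underline{a}^{t+2}$ when $t$ is odd but equals $\underline{-a}^{t+2}$ when $t$ is even. Consequently the reduction of a potential equation $\mathbf{v}_i=\mathbf{v}_j^R$ to an equation between original tuples of $S$ splits into two parity cases, and in one of them one must track how the interior symbols $a,-a$ line up against their reverses before the contradiction with the orientability of $S$ becomes visible. Getting this parity case analysis to close up uniformly is where the real care is needed; the rest of the argument is essentially a routine adaptation of Lemma~\ref{lemma-E}.
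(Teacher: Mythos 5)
Your overall route is the paper's: period from the fact that $\underline{a}^{t+2}$ occurs exactly once in the new ring sequence, weight from the $\pm 2a$ contribution of the inserted pair under the alternating-sign convention combined with $2a=1-w_q(S)$ (the paper's computation is the same as yours), and orientability by listing the $n$-tuples created by the insertion, showing they cannot occur in $S$ or $S^R$, and reducing any residual equality $\mathbf{v}_i=\mathbf{v}_j^R$ to a forbidden equality among tuples of $S$. Your reading of ``$a^{n-1}$'' as the signed string, and your identification of the reversal parity of $\underline{a}^{t+2}$ as the delicate point, also match what the paper negotiates. (A minor slip: $\underline{a}^t$ need not be the \emph{unique} maximal signed run in $S$; the period claim instead follows because $S$ contains no string $\underline{a}^{t+1}$ at all and only one occurrence is lengthened.)

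The genuine gap is your claim that every new $n$-tuple meeting the inserted pair contains the whole run $\underline{a}^{t+2}$; this is false, and your non-occurrence argument rests on it. The window beginning at the second inserted symbol sees only $\underline{(-a)}^{t+1}$ followed by $s_{r+t},\dots$, and the window ending at position $r+t$ sees only $(\dots,s_{r-1},\underline{a}^{t+1})$; these are the tuples the paper denotes $\mathbf{v}_{-1}$ and $\mathbf{v}_{n-1-t}$. They carry a signed run of length only $t+1$, so ``longer than any run in $S$'' does not exclude them --- indeed $\mathbf{v}_{-1}$ carries a run of $-a$, whose exclusion from $S$ requires the \emph{inverse} maximality of $\underline{a}^t$ rather than plain maximality --- and they must additionally be checked against each other (the pairing $\mathbf{v}_{-1}\neq\mathbf{v}_{n-1-t}^R$) and against the reverses of the interior $\mathbf{v}_i$. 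The paper's proof devotes a separate sentence and a separate clause in the final case analysis to exactly these two boundary tuples; without that, your orientability step is incomplete.
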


\begin{proof}
 Suppose that $w_q(S) \not=1$. If $q$ is odd or $q$ is even and $w_q(S)$ is odd then $a$ with $w_q(S)-1+2a=0$ exists and $a \not=0.$ Let $t$ be the length of a maximal signed run of $a$ in $S$. The fact
that $w_q(\mathcal{E}_a(S))=1$ or $-1$ follows because the two term $a$ and $q-a$ introduced by application of $\mathcal{E}$ are added with opposite signs in $w_q(\mathcal{E}_a(S))$. By definition
the period of $\mathcal{E}_a(S)$ divides $m+2$, and is precisely $m+2$ because its ring sequence
\[ [s_0,s_1,s_{r-1},a,-a,s_r,s_{r+1},\cdots,s_{m-1}] \]
contains exactly one occurrence of $\underline{a}^{t+1}$.

It remains to show that $\mathcal{E}_a(S)$ is an orientable sequence.  We only need to examine the
$n$-tuples that include the string containing the inserted elements.  Inserting these two elements means that the following
$n-1-t$ $n$-tuples that occur in $S$ (where the subscripts are computed modulo $m$):
\[ \mathbf{u}_i  =  (s_{r-i},\dots, s_{r-1}, \underline{a}^t, s_{r+t}, \dots,s_{r+n-i-1}),~~~(1\leq i\leq n-1-t), \]
are replaced in $\mathcal{E}_a(S)$ by the following $n+1-t$ $n$-tuples:
\begin{eqnarray*}
\mathbf{v}_{-1} & = & (\underline{(-a)}^{t+1},s_{r+t}, \dots,s_{r+n-2})\\
\mathbf{v}_i & = & (s_{r-i},\dots, s_{r-1}, \underline{a}^{t+2},s_{r+t},\dots, s_{r+n-i-3}),~~~(0\leq i\leq n-2-t),\\
\mathbf{v}_{n-1-t} & = & (s_{r-n+t+1},\dots,s_{r-1},\underline{a}^{t+1}).
\end{eqnarray*}

Now the $\mathbf{v}_i$ tuples that contain a run $a^{t+2}$ are all distinct and cannot
occur in $S$ and nor do their reverses. Moreover, since $\underline{a}^t$ is maximal, $\mathbf{v}_{-1}$ and $\mathbf{v}_{n-1-t}$ are distinct and  are distinct from the other $\mathbf{v}_i$ and cannot occur in $S$. The only remaining task is to show that no $\mathbf{v}_i$ equals  $\mathbf{v}_j^R$ for any $i$ and $j$. This follows if  $\mathbf{v}_{-1} \not= \mathbf{v}_{n-1-t}^R$ and $\mathbf{v}_i \not= \mathbf{v}_{n-2-t-i}^R$, $i=0,\dots,\lfloor \frac{n-2-t}{2} \rfloor$. However, if such an equality occurred then it follows that  $\mathbf{v}_{\lfloor \frac{n-2-t}{2} \rfloor}=\mathbf{v}^R_{\lfloor \frac{n-1-t}{2} \rfloor}$, which contradicts $S$ being orientable. \qed
\end{proof}

\section{Conclusions and future work}

We have developed recursive methods for generating an orientable sequence of order $n+1$ from one
of order $n$, as long as the order $n$ sequence satisfies certain key properties. To date, we do
not have general methods of generating the required `starter' sequences to enable use of these
recursive constructions. Developing such sequences remains a topic for further study.


\end{document}